\numberwithin{equation}{section}
\newtheorem*{Proof of Theorem 1}{Proof of Theorem 1}{\it}{\rm}
\theoremstyle{remark}
\newtheorem{definition}{Definition}[section]
\newtheorem{theorem}{Theorem}[section]
\crefname{theorem}{Theorem}{Theorems}
\newtheorem{lemma}{Lemma}[section]
\crefname{lemma}{Lemma}{Lemmas}
\newtheorem{example}{Example}[section]
\crefname{corollary}{Corollary}{Corollaries}
\newtheorem{assump}{Assumption}[section]
\newcommand\Set[2]{\{\,#1\mid#2\,\}}
\newcommand{\codim}{\operatorname{\rm codim}}
\newcommand{\Ran}{\operatorname{\rm Ran}}
\DeclareMathOperator{\spn}{span}
\renewcommand{\vec}[1]{\mathbf{#1}}
\title[Perturbations of embedded eigenvalues for self-adjoint ODE systems]{Perturbations of embedded eigenvalues \\ for self-adjoint ODE systems}
\author{Sara Maad Sasane}
\address{Sara Maad Sasane, Lund University}
\email{sara@maths.lth.se}
\author{Alexia Papalazarou}  
\address{Alexia Papalazarou, Lund University}
\email{alexia.papalazarou@math.lth.se} 
\date{\today}
\begin{document}

\begin{abstract}\label{S:abstract}
We consider a perturbation problem for embedded eigenvalues of a self-adjoint differential operator in $L^2(\mathbb R;\mathbb R^n)$. In particular, we study the set of all small perturbations in an appropriate Banach space for which the embedded eigenvalue remains embedded in the continuous spectrum. We show that this set of small perturbations forms a smooth manifold and we specify its co-dimension. Our methods involve the use of exponential dichotomies, their roughness property and Lyapunov-Schmidt reduction.
\end{abstract}

\maketitle

%\tableofcontents

\section{Introduction}\label{S:intro}
\label{sec:1}
\noindent
%(1)
It is well known that an eigenvalue which is separated from the rest of the spectrum is stable in the sense that an added small perturbation can only move, but not remove the eigenvalue \cite[p.213]{Kato}. In contrast, eigenvalues which are embedded in the continuous spectrum behave in a completely different way. Those eigenvalues may be very unstable under perturbations, and typically, an arbitrary small perturbation makes the eigenvalue disappear.

%(2)
Embedded eigenvalues occur in many applications arising in physics. For example, in quantum mechanics, eigenvalues express the energy bound states of the energy operator that can be attained by the underlying physical system. Therefore, if such an eigenvalue is embedded in the continuous spectrum it is important to determine whether it, and consequently, the bound state, persists when perturbing the potential. 
Another possibility is the occurrence of embedded eigenvalues in inverse scattering problems. In this setting, embedded eigenvalues correspond to soliton-type structures for the original integrable problems whose robustness under perturbations is therefore again determined by the fate of the embedded eigenvalue.

A different motivation for the same question arises in systems that support nonlinear waves (e.g. water waves) or vortex solutions (e.g. in photonic lattices or other nonlinear optical systems). In both cases, the main difficulties are again embedded eigenvalues which will always be present due to the specific nature of the nonlinear systems describing them. For instance, it is often of interest to construct other types of waves from a given nonlinear wave: starting with a water wave, for example, with a localized profile in one spatial direction, one may wish to glue several well-separated copies of the original water wave together to yield a wave with several elevated humps. The potential appearing in the energy operator corresponding to the newly constructed wave consists then of several copies of the potential of the original wave and, to determine the stability properties of the new water wave, one needs to investigate the spectrum of the new energy operator using information from the original operator. In this situation, the fate of embedded eigenvalues under large perturbations (gluing widely separated potentials together is not a small regular perturbation) is the crucial issue that determines the stability of the new waves.

%(3)Where are we
In this paper we focus on the perturbation problem for a self-adjoint differential operator on $L^2(\mathbb{R};\mathbb{R}^n)$.
This is an ODE example of a perturbation problem for embedded eigenvalues, and it serves as a basis for other self-adjoint problems, including more complicated systems of partial differential equations.

%(5,6)Our goal, what we will prove, method (imprecise).
Many authors have worked with the problem of finding conditions under which an embedded eigenvalue disappears, see for example \cite{AgHeSki}. On the other hand, we in this case are interested in those potentials for which the embedded eigenvalue exists. Some examples of similar approach to such problems can be found in \cite{AgHeMa,AgHeSki,Astaburuaga,SamHeMart,DeMaSaCylinder,DeMaSaBilaplacian}. More specifically, our goal is to determine the structure of the set of those potentials, i.e. the co-dimension of the manifold of all those perturbations for which the embedded eigenvalue persists. Our method has previously been developed for solving perturbation problems for partial differential operators, as for example in \cite{LaMa,DeMaSaCylinder,DeMaSaBilaplacian}. Other problems concerning persistence of embedded eigenvalues have been studied in \cite{AgHeMa} and \cite{AgHeSki}.

%(8)
%A Schr\"odinger type operator is an operator of the form $-d^2/{dx}^2 + V$. Under specific decay hypotheses on the potential V, it can be shown that an operator of this type does not have any embedded eigenvalues. Nonetheless, it turns out that when two or more such systems are coupled together, there can be embedded eigenvalues of the resulting operator in $L^2(\mathbb{R})^n$. Such systems share many properties with the more difficult cases of Schr\"odinger operators in $L^2$ of some higher-dimensional domain, and therefore they can serve as simplified models of more difficult situations. The current paper uses methods that have been developed for solving perturbation problems for partial differential operators, for instance, the magnetic Schr\"odinger operator on a cylindrical domain where the PDE system is rewritten as an ODE in a function space and obtain exponential dichotomies for these. Perturbation problems for embedded eigenvalues of magnetic Schr\"odinger operators have been studied in e.g. (....).

\section{Problem Setup and Main results }

\subsection{Problem setup} 
\vspace{0.5cm}

We are interested in the operator $\mathcal{L}$ in $L^2(\mathbb{R};\mathbb{R}^n)$, such that
\begin{equation}\label{unperturbed}
    \mathcal{L} \coloneqq -\frac{d^2}{dx^2}+ A(x),
\end{equation}
where $A: \mathbb{R}\rightarrow \mathbb{R}^{n \times n}$ is a continuous and symmetric matrix. We also assume that $A(x)$ is asymptotically constant as $|x| \to \infty$ i.e there exists a matrix $A_\infty $ such that $|A(x)-A_\infty| \to 0$ as $|x| \to \infty$. More information on that can be found later on in ~\cref{sec:3}. We assume that the potential $A(x)-A_\infty$ decays algebraically as $|x| \to \infty$ with a sufficiently fast algebraic rate which will be defined in ~\cref{sec:2.2}. %This $A_\infty$ matrix is part of the system at infinity which will be studied in \cref{subsec:3.1} and plays a key role in this paper.
%\todo[color=green!40]{Move the previous sentence up to after you say that $A(x)$ is asymptotically constant.} 
We also let the eigenvalues of $A_\infty$ be denoted by $a_i$, $i=1,\dots, n$ (counted with multiplicity), and they are numbered in increasing order: $a_1\le a_2\le \dots\le a_n$. This will be beneficial in ~\cref{subsec:3.1}.

%\begin{assump}\label{assump1}
%The potential $A(x)-A_\infty$ has to decay algebraically as $|x| \to \infty$ with a sufficiently large algebraic rate which will be defined in ~\cref{sec:2.2}.
%\end{assump}

%\begin{assump}\label{assump2}
%Let the eigenvalues of $A_\infty$ be denoted by $a_i$, $i=1,\dots, n$ (counted %with multiplicity), and we assume that they are numbered in increasing order: $a_1\le a_2\le \dots\le a_n$.
%\end{assump}
%The later assumption in particular will be beneficial in ~\cref{subsec:3.1}.
 
The eigenvalue equation for the operator $\mathcal{L}$ is 
\begin{equation}\label{E:ev}
    \mathcal{L}\vec{u} = \lambda \vec{u}.
\end{equation}
$\mathcal{L}$ defines a self-adjoint operator, and so its spectrum is a subset of the real line, see \cite[Theorem VI.8, p.194]{ReedSimon}.
We say that $\lambda \in \mathbb{R}$ is an eigenvalue of $\mathcal{L}$ if $(\mathcal L - \lambda I):L^2(\mathbb R;\mathbb R^n)\to L^2(\mathbb R;\mathbb R^n)$ is not injective,
or, equivalently, if $\Ran(\mathcal L-\lambda I)$ is not dense in $\mathcal L(\mathbb R;\mathbb R^n)$ \cite[p. 91]{AkGlaz}. This happens 
if and only if the eigenvalue equation \eqref{E:ev} has a solution belonging to $L^2(\mathbb R;\mathbb R^n)$.

 The continuous spectrum of $\mathcal L$ consists of the $\lambda\in \mathbb R$ such that $\Ran(\mathcal L-\lambda I)$ is not closed \cite[p.88]{AkGlaz}.
An embedded eigenvalue is an 
eigenvalue which also belongs to the continuous spectrum.

 The continuous spectrum of $\mathcal{L}$ is the half line $[\alpha_1, \infty)$ where $\alpha_1$ is the smallest eigenvalue of $A_\infty$. By using rigged Hilbert spaces (see e.g. \cite{Babbitt}), one can show that for the type of operators that we consider, the continuous spectrum  consists of the set of $\lambda\in \mathbb R$ for which there exist bounded solutions of the eigenvalue equation \eqref{E:ev}, that do not belong to $L^2(\mathbb R;\mathbb R^n)$. These solutions are referred to as {\em generalized eigenfunctions}.
 
 %It is generated by the eigenfunctions that are bounded solutions of the eigenvalue equation but do not belong in $L^2(\mathbb{R};\mathbb{R}^n)$ as stated in the above definition.

We assume that we have an embedded eigenvalue $\lambda_0$ for the unperturbed operator $\mathcal{L}$. The following example shows that this assumption can be met.

\begin{example}
Let 
%\todo[color=green!40]{$\lambda_0=0$ can be removed}
$A(x)$ be a $2 \times 2$ matrix such that
\begin{equation*}
A(x) = 
\begin{pmatrix}
a(x) & 0 \\
0 & -1 \\
\end{pmatrix}.
\end{equation*}
%with $a_1(x) \to a_1$ as \todo{add absolute value} $x \to \infty$ and $a_1 > 0$.
We will show that it is possible to choose $a(x)$ so that $a(x)\to 1$ as $|x|\to \infty$ with an exponential convergence rate so that $0$ is an embedded eigenvalue of $\mathcal L$. 

To find the spectrum of $\mathcal L$ in $\mathcal L(\mathbb R;\mathbb R^n)$, we study the two operators arising from the diagonal elements of $A(x)$, i.e. 
\begin{equation*}
  \begin{aligned}
     \mathcal{L}_1 &= -u''+a(x)u, \\ \mathcal{L}_2 &= -u''-u.
  \end{aligned}
\end{equation*}

$\mathcal L_2$ has continuous spectrum $[-1,\infty)$ with corresponding generalized eigenfunctions $\cos(\sqrt{\lambda+1}x)$ and $\sin(\sqrt{\lambda+1}x)$, ($\lambda>-1$), while 
$\mathcal L_1$ has continuous spectrum $[1,\infty)$ with corresponding generalized eigenfunctions that are asymptotic to $\cos(\sqrt{\lambda-1} x)$ and $\sin(\sqrt{\lambda-1} x)$, $(\lambda>1)$, as $|x|\to \infty$.
The continuous spectrum of $\mathcal L$ is the union of the continuous spectra of $\mathcal L_1$ and $\mathcal L_2$, i.e.
$[-1,\infty)$ with corresponding generalized eigenfunctions $(u_1,u_2)\ne (0,0)$, where $u_i$ is a generalized eigenfunction of $\mathcal L_i$ corresponding to $\lambda$, or $u_i=0$.

We will now construct $a(x)$ so that $0$ is an eigenvalue of $\mathcal L$. Note that the eigenvalue will then be embedded, since $0\in \sigma_c(\mathcal L)=[-1,\infty)$.
This we do by choosing an eigenfunction, for example
 $u_1(x)=1/\cosh x$, and then computing what the corresponding potential $a(x)$ must be.
The eigenvalue equation for $\mathcal L_1$ is $-u_1''+a(x)u=0$, and it is satisfied if
\begin{equation*}
a(x) = 1-\frac{2}{\cosh^2{x}}.
\end{equation*}

Therefore, $\sigma(\mathcal{L}) = [-1,\infty)$ and $\lambda_0=0$ is embedded in the continuous spectrum. The corresponding eigenfunction is $(u_1(x),0)^T$.
\end{example}
In the same manner, it is possible to construct operators of the type \eqref{unperturbed} in $L^2(\mathbb R;\mathbb R^n)$ for an arbitrary $n\ge 2$, which has an embedded eigenvalue. For $n=1$, there are no embedded eigenvalues for the type of potentials that we consider in this paper, since it can be shown that all eigenfunctions decay exponentially (see Lemma \ref{lemma:expdecay}), and if $n=1$, then the solution space of the eigenvalue equation for a particular $\lambda$ is $2$-dimensional. If $\lambda$ is an eigenvalue, there is one solution decaying exponentially as $|x|\to \infty$. The other solutions are linear combinations of this solution and another solution which grows exponentially as $|x|\to\infty$, and so there can be no solutions of the eigenvalue equation which are bounded but not exponentially decaying.

%Now, assume that $\lambda_0$ is not an eigenvalue of $A_\infty$. 
The main theorem of this paper, ~\cref{thm1}, states that there exists a manifold of codimension $2m$ in the space of perturbations so that for $\lambda$ sufficiently close to $\lambda_0$, $\lambda$ is an embedded eigenvalue for the perturbed problem if the perturbation is sufficiently small. We note that by $m$ we denote the number of eigenvalues of $A_\infty$ that are less than $\lambda_0$, and $2m$ is the number of bounded solutions of the eigenvalue equation for $\lambda=\lambda_0$, that are not exponentially decaying, and this number we identify as the multiplicity of the continuous spectrum near $\lambda_0$.
In other words, we analyze the persistence of the eigenvalue $\lambda_0$ when a small perturbation $B$ which decays to 0 with a certain decay rate (see \eqref{eq:decaybanach}) is added to the potential $A$. Therefore, we consider the perturbed operator $\mathcal{L}+B$ which is given by
\begin{equation}\label{eq:lperturbed}
(\mathcal{L}+B)\vec{u}:=-\vec{u''} +[A(x)+B(x)]\vec{u}
\end{equation}
and show that for small $\epsilon > 0$, the set
\begin{equation}\label{eqn:S_eps}
    \mathcal{S}_\epsilon := \{B\in X_\beta\mid \text{there exists } \lambda \in (\lambda_0 - \epsilon, \lambda_0 + \epsilon)\, \text{ such that } \, \lambda \, \text{ is an eigenvalue of } \, \mathcal{L} {+B} \}
\end{equation}
is a smooth manifold of codimension of $2m$ in a neighbourhood of $0\in X_\beta$.
\subsection{Main result}\label{sec:2.2}
\noindent We define the Banach space 
\begin{equation}\label{eq:decaybanach}
X_{\beta} = \{B\in \mathbb{R}^{n \times n}, B \text{ is symmetric} \text{ and } \|B\|_{X_\beta}:=\sup_{x \in \mathbb{R}}(|B(x)|)(1+|x|)^{\beta} < \infty \},
\end{equation}
where $B$ is the matrix multiplication operator that premultiplies the function $\vec{u}$ by the matrix function $B(x)$. It is a bounded operator on $L^2(\mathbb R;\mathbb{R}^n$), and $\beta > 1$. The latter condition is needed as we will later see in ~\cref{lma4}. The space $X_\beta$ specifies the decay rate of $A(x)-A_\infty$ and of the added perturbation $B(x)$.

Let us now make the following assumptions:
\begin{assump}\label{assump1}
We assume that $A(x)-A_\infty\in X_\beta$, for some $\beta>1$.
\end{assump}

\begin{assump}\label{assump2}
$\lambda_0$ is not an eigenvalue of $A_\infty$ i.e. $\lambda_0 \notin \{\alpha_0, \dots,\alpha_n \}$ 
\end{assump}
%
%\noindent We define $m$ to be the number of eigenvalues of $A_\infty$ which are less than $\lambda_0$.
%
%We continue with another assumption which will be beneficial in ~\cref{subsec:3.1}:
%\begin{assump}\label{assump2}
 %Let the eigenvalues of $A_\infty$ be denoted by $a_i$, $i=1,\dots, n$ (counted with multiplicity), and we assume that they are numbered in increasing order: $a_1\le a_2\le \dots\le a_n$.
%\noindent We assume that
%$\lambda_0 \notin \{a_1,\dots,a_n \} $. 
%\end{assump}
%
%For $\epsilon>0$, we define the set 
%\begin{equation}\label{setofperturbations}
%\mathcal{S}_\epsilon := \{B\in X_\beta| \text{  there exists } \, \lambda \in (\lambda_0 - \epsilon, \lambda_0 + \epsilon)\, \text{ such that } \, \lambda \, \text{ is an eigenvalue of } \, \mathcal{L} {+B} \} as defined in \eqref{eq:lperturbed}
%\end{equation} 
We define $m$ to be the number of eigenvalues of $A_\infty$ that are less than $\lambda_0$.
We are now ready to state our main result:

\begin{theorem} \label{thm1}
Let $\mathcal{L}$ be the operator defined in ~\eqref{unperturbed} and let $\lambda_0$ be an eigenvalue of $\mathcal{L}$. Suppose that Assumptions 1 and 2 hold. Let $A_\infty$ and $m$ be as above, and let $S_\epsilon$ be as in \eqref{eqn:S_eps}.
Then there exists an $\epsilon>0$ and a neighbourhood $\mathcal{N}$ of $0\in X_\beta$,\,  such that\, $\mathcal{S_{\epsilon} \cap \mathcal{N}}$ is a manifold of codimension $2m$ in $X_\beta$.
\end{theorem}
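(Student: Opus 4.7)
The plan is to convert the eigenvalue equation into a first-order ODE, use exponential dichotomies on the two half-lines to parametrize decaying solutions, and then apply a Lyapunov--Schmidt reduction to a finite-dimensional matching condition that encodes the existence of an $L^2$ eigenfunction. Setting $U = (\vec u, \vec u')^T$, the equation $(\mathcal{L}+B)\vec u = \lambda \vec u$ becomes $U' = M(x;\lambda,B)U$, whose asymptotic matrix $M_\infty(\lambda)$ has eigenvalues $\pm\sqrt{a_i - \lambda}$; by Assumption~\ref{assump2}, for $\lambda$ near $\lambda_0$ exactly $n-m$ of these are positive real, $n-m$ negative real, and $2m$ purely imaginary. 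Since $A - A_\infty$ and $B$ lie in $X_\beta$ with $\beta > 1$, the coefficient matrix differs from $M_\infty(\lambda)$ by an integrable perturbation on each half-line, and the roughness theorem for one-sided exponential dichotomies supplies, for $(B,\lambda)$ near $(0,\lambda_0)$, smoothly varying $(n-m)$-dimensional subspaces $V^s_+(\lambda,B),V^u_-(\lambda,B)\subset\mathbb{R}^{2n}$ consisting of initial data at $x=0$ of solutions decaying exponentially as $x\to+\infty$ and $x\to-\infty$ respectively. By Lemma~\ref{lemma:expdecay} every $L^2$ eigenfunction decays exponentially, so the eigenvalue condition is equivalent to $V^s_+(\lambda,B)\cap V^u_-(\lambda,B)\neq\{0\}$ for some $\lambda\in(\lambda_0-\epsilon,\lambda_0+\epsilon)$.

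I would next assemble smooth local bases of $V^s_+$ and $V^u_-$ into a $2n\times 2(n-m)$ matrix $E(\lambda,B)$, so the eigenvalue condition is $\ker E(\lambda,B)\neq\{0\}$. Treating the simple case first (the general case follows by tracking the persistent branch), $\ker E(\lambda_0,0)=\langle c_0\rangle$ is one-dimensional and $\operatorname{Im} E(\lambda_0,0)$ has codimension $2m+1$ in $\mathbb{R}^{2n}$. Writing $c=tc_0+c_\perp$ and applying the implicit function theorem to the projection of $E(\lambda,B)c=0$ onto $\operatorname{Im} E(\lambda_0,0)$, one solves for $c_\perp(\lambda,B)$ and the projection onto the cokernel yields a Lyapunov--Schmidt reduced equation $\Theta(\lambda,B)=0$ with $\Theta:(\lambda_0-\epsilon,\lambda_0+\epsilon)\times X_\beta\to\mathbb{R}^{2m+1}$ smooth and $\Theta(\lambda_0,0)=0$.

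The main step, and the principal obstacle, is to show that $D\Theta(\lambda_0,0)$ is surjective. Using a variation-of-constants formula for the derivatives of the dichotomy subspaces, $\partial_B\Theta(\lambda_0,0)[\delta B]$ has components that are $L^2$-pairings against $\vec u_0$ and the $2m$ bounded non-$L^2$ generalized eigenfunctions $\vec v_1,\dots,\vec v_{2m}$ of $\mathcal{L}$ at $\lambda_0$ that are asymptotic to the oscillating modes of $M_\infty(\lambda_0)$, schematically of the form $\delta B\mapsto\bigl(\int\delta B(x)\vec u_0(x)\cdot\vec v_j(x)\,dx\bigr)_{j=0,\dots,2m}$ with $\vec v_0=\vec u_0$. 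Because $\vec u_0$ is a nontrivial solution of a linear ODE its zeros are discrete, and the matrix-valued products $\vec u_0\vec v_j^T$ can be shown to be linearly independent as continuous matrix-valued functions, so compactly supported symmetric rank-one perturbations $\delta B\in X_\beta$ can realize arbitrary prescribed tuples of these pairings and yield surjectivity onto $\mathbb{R}^{2m+1}$. The technical work here is obtaining a workable formula for the derivatives of the dichotomy in $B$ and verifying this linear independence; I expect this transversality computation to be the hardest part of the argument.

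Once $D\Theta(\lambda_0,0)$ is surjective, $\Theta^{-1}(0)$ is a codimension-$(2m+1)$ submanifold of $X_\beta\times(\lambda_0-\epsilon,\lambda_0+\epsilon)$. Since $\lambda_0$ is an isolated eigenvalue of $\mathcal{L}$ one has $\partial_\lambda\Theta(\lambda_0,0)\neq 0$, so the implicit function theorem eliminates $\lambda$ via $\lambda=\lambda(B)$, reducing the system to $\widetilde\Theta(B)=0$ for a smooth map $\widetilde\Theta:X_\beta\to\mathbb{R}^{2m}$ with $\widetilde\Theta(0)=0$ and $D\widetilde\Theta(0)$ surjective. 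Its zero set locally coincides with $\mathcal{S}_\epsilon\cap\mathcal{N}$, which is therefore a codimension-$2m$ submanifold of $X_\beta$ as claimed.
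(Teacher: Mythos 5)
Your overall strategy --- first-order system, one-sided exponential dichotomies via the roughness theorem, exponential decay of eigenfunctions, a finite-dimensional matching map, and Lyapunov--Schmidt reduction --- matches the paper's proof (which uses the matching function $\iota(U_0^s,U_0^u;\lambda,B)$ and a projection $Q$ onto $\Ran\iota(\cdot,\cdot;\lambda_0,0)$ instead of your matrix $E(\lambda,B)$; these are equivalent formulations). However, there are two genuine problems.

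First, your justification for $\partial_\lambda\Theta(\lambda_0,0)\neq 0$ is wrong: $\lambda_0$ is an \emph{embedded} eigenvalue, not an isolated one, so the premise does not hold, and even for an isolated eigenvalue the implication would need an argument. What the paper actually does (\cref{lma9}) is compute the $\lambda$-derivative of $F_*(\lambda,B):=\langle U_*^\perp(0),F(\lambda,B)\rangle$ explicitly: after substituting $\partial N/\partial\lambda$ into the integral representation and using that the first two integrals vanish by $U_*^\perp(0)\in\ker Q^*$, one gets $\partial_\lambda F_*(\lambda_0,0)=\int_{-\infty}^{\infty}\|\vec u_*\|^2\,d\xi=1$ by the normalization. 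You need this computation; the ``isolated eigenvalue'' shortcut is not available.

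Second, you flag the surjectivity of $D\Theta(\lambda_0,0)$ as the hardest step and only sketch a plausibility argument (``$\vec u_0\vec v_j^T$ linearly independent, rank-one $\delta B$ realize arbitrary tuples''). This is where the real work lies and it is not filled in. The paper handles it by (i) introducing the adjoint system $W'=-(M_\infty(\lambda_0)+L(\cdot;0))^*W$ and using a basis $\{W_k(0)\}_{k=1}^{2m}\cup\{U_*^\perp(0)\}$ of $\ker Q^*$ consisting of initial data of adjoint solutions; (ii) deriving the explicit formula $F_k'(0)B=\int(\vec w_k,B\vec u_*)\,d\xi+\bigl(\int(\vec u_*,B\vec u_*)\,d\xi\bigr)\bigl(\int(\vec w_k,\vec u_*)\,d\xi\bigr)$; and (iii) proving the calculus-of-variations \cref{lma10}, which shows that if $\int(\vec v,B\vec u_*)\,d\xi=0$ for all symmetric $B\in X_\beta$ then $\vec v\equiv 0$ --- the proof of that lemma uses the structure of the symmetric test matrices $B$, the continuity of $\vec v$, and the uniqueness theorem for the ODE to handle the zeros of components of $\vec u_*$. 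Only then, via the basis property of $\{W_k(0),U_*^\perp(0)\}$, does one deduce that the $F_k'(0)$ are linearly independent. Your proposal gestures at this but omits both the adjoint-system machinery that identifies the correct pairing functionals and the non-degeneracy lemma that makes the transversality rigorous, so as written there is a gap at the decisive step.
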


In other words, ~\cref{thm1} provides the desired characterization of the set of perturbations which do not remove the embedded eigenvalue $\lambda$ of the perturbed operator $\mathcal{L}+B$. 

The methods used in this paper are based in the ones in \cite{DeMaSaCylinder} and \cite{DeMaSaBilaplacian} that are originally formulated for the PDE perturbation problems. To solve this perturbation problem, see ~\eqref{eq:lperturbed}, we first write the eigenvalue problem as a system of first order ODEs, ~\eqref{fullsystem}. It will be shown that this system, after a necessary shift $\eta$ (see ~\cref{lma1}), has  exponential dichotomies at $\mathbb R_+$ and $\mathbb R_-$, i.e. for each $x \in \mathbb{R}$
it has a stable subspace that consists of initial values (at $x$) corresponding to the exponentially decaying solutions at $x\to\infty$, while solutions not in this subspace instead grow exponentially as
 $x\to\infty$. Similarly, there exists an unstable subspace of initial values (at $x$) corresponding to the exponentially decaying solutions as $x\to-\infty$, while all the other solutions instead grow exponentially as $x\to -\infty$, see ~\cref{lma2} and ~\cref{lemma:expdecay}. The main tool for deriving this result is ~\cref{theorem:roughness}. The eigenvalue problem can then be translated into the problem of determining whether these stable and unstable subspaces intersect. That is done by using Lyapunov-Schmidt reduction. Indeed we show that for small perturbations, there are $2m$ conditions that need to be satisfied in order for the exponentially decaying solutions at $+\infty$ connect to the ones at $-\infty$, see ~\cref{proof:main}, Proof of Theorem 2.1. Finally, we use the implicit function theorem together with the fact that the unperturbed system solves the equation, to prove that these can be solved in a neighbourhood of $\lambda_0$.

\section{The ODE formulation}\label{sec:3}
\noindent
In this section we study the system as $|x| \to 
\infty$ and we introduce the notion of exponential dichotomies.
We will see in ~\cref{lemma:expdecay} that any eigenfunction decays exponentially. Initial values of asymptotically decaying solutions of non-autonomous linear systems such as the one we are interested in, ~\eqref{fullsystem}, can be found as intersections of stable and unstable subspaces. In order to investigate those stable and unstable subspaces, the system at infinity and the concept of exponential dichotomies are introduced.

  \subsection{The system at infinity}\label{subsec:3.1}

The eigenvalue equation for the perturbed operator $\mathcal{L} + B$ corresponding to the eigenvalue $\lambda$ is

\begin{equation}\label{eq:eigenvalueproblemforLB}
-\vec{u}''+(A(x)+B(x))\vec{u}=\lambda \vec{u}.
\end{equation}
It will be beneficial to write \eqref{eq:eigenvalueproblemforLB} as system of first order ODEs. To do that, we set $\vec{u}=\vec{u}_1$ and $\vec{u}'=\vec{u}_2$ and obtain
\begin{equation}\label{fullsystem}
U'=M(x;\lambda,B)U
\end{equation}
where, $U=(\vec{u}_1,\vec{u}_2)^{T} \in \mathbb{R}^{2n}$ and
$$M(x;\lambda,B)=\begin{bmatrix}
0 & I\\
A(x)+B(x)-\lambda I & 0
\end{bmatrix}.$$

As $|x| \to \infty$, $A(x) \to A_{\infty}$ and $B(x) \to 0$. By replacing $A(x)$ and $B(x)$ by these limits, we obtain the system at infinity which is given by 
\begin{equation} \label{systeminfinity}
U'=M_{\infty}(\lambda) U,
\end{equation}
where $M_{\infty}(\lambda) =\begin{bmatrix}
0 & I\\
A_{\infty}-\lambda I & 0
\end{bmatrix}$.
Its solutions capture the asymptotic behaviour of the solutions of our unperturbed system, i.e. \eqref{fullsystem}, which we will see later in Section 4.  

Now some comments on the set of eigenvalues of $M_{\infty}(\lambda)$.

\noindent As $A_{\infty}$ is real and symmetric, it is diagonalizable by an orthogonal matrix, and so  $A_{\infty}=Q^T D Q$ where $D$ is diagonal and $Q^T Q= I$.
We assume that $\lambda$ belongs to an interval which is small enough so that the sign of $\lambda-a_i$ doesn't change for $\lambda$ in this interval.
\noindent From ~\cref{assump2}, it follows that the eigenvalues of $M_{\infty}(\lambda)$ are the pairs 
\begin{equation*}
     \mu =\begin{cases}
        \pm \sqrt{a_i-\lambda} &\text{for }i\ge m+1, \\
        \pm i\sqrt{\lambda-a_i} &\text{for }i\le m.
     \end{cases}
\end{equation*}
where $i=1,\dots,n$. In particular, there are $2m$ imaginary eigenvalues and $2(n-m)$ real ones.
We shall denote by $\mu_{\min} := \sqrt{a_{m+1}-\lambda_0}$, the smallest positive  eigenvalue of $M_\infty(\lambda_0)$.

Let $X^u$ and $X^s$ be the span of eigenfunctions corresponding to positive and negative eigenvalues of $M_{\infty}(\lambda_0)$ respectively. Let us also denote by $X^c$ the span of eigenfunctions corresponding to the purely imaginary eigenvalues. Let $P^u, P^s$ and $P^c$ be the spectral projections onto $X^u$, $X^s$ and $X^c$ respectively.

\subsection{Exponential dichotomies}
Exponential dichotomies is the main tool for proving the main result, ~\cref{thm1}. In this section we introduce this concept and show that after a slight modification, our perturbed and unperturbed systems possess  exponential dichotomies. This is done by first proving that the system at infinity, also after a corresponding modification, possesses an exponential dichotomy, and then using a perturbation result, the Roughness theorem, Theorem \ref{theorem:roughness} to get the desired result.

\begin{definition}\label{D:expdich}
An ODE system $U'=C(x)U$ is said to possess an exponential dichotomy on $J$, where $J$ is an unbounded interval of $\mathbb R$, if there exist constants $K>0, \kappa^s <0< \kappa^u$ and a family of projections $P(x_0)$ %{\red{Sara: why do I write $P(x_0)?$ Shouldn't it be $P(x)?$}}
such that:
\begin{itemize}
\item For any $x\in\mathbb{R}$ and $U \in \mathbb{R}^{2n}$, there exists a unique solution $\Phi^s(x,x_0)U$ of the system defined for $x \geq x_0$, $x,x_0 \in J$ such that
\begin{equation*}
 \Phi^s(x_0,x_0)U=P(x_0)U \quad \text{and} \quad \|\Phi^s(x,x_0)U\| \leq Ke^{\kappa^s(x-x_0)}\|U\|.
\end{equation*}
\item  For any $x\in\mathbb{R}$ and $U$ there exists unique solution $\Phi^u(x,x_0)U$ of the system defined for $x \leq x_0$, $x,x_0 \in J$ such that
\begin{equation*}
\Phi^u(x_0,x_0)U=(I-P(x_0))U \quad \text{and} \quad \|\Phi^u(x,x_0)U\| \leq Ke^{\kappa^u(x-x_0)}\|U\|.
\end{equation*}
\item The solutions $\Phi^s(x,x_0)U$ and $\Phi^u(x,x_0)U$ satisfy
\begin{gather*}
\Phi^s(x,x_0)U \in \Ran P(x) \quad \text{for all} \quad x \geq x_0, \quad x,x_0 \in J\\
\Phi^u(x,x_0)U \in \ker P(x) \quad \text{for all} \quad x \leq x_0, \quad x,x_0 \in J.
\end{gather*}
\end{itemize}
\end{definition}
In order to use \cref{D:expdich} for our system, we need to introduce a shift, $\eta$, which makes the eigenvalues of $M_\infty(\lambda_0)$ move to the right or left, so that they avoid the imaginary axis.
\begin{lemma}\label{lma1}
Let $J=\mathbb R, \mathbb R_+$ or $\mathbb R_-$. Suppose that $\eta \in (0, \mu_{\min})$. Then the systems {$U'=(M_\infty(\lambda_0)+\eta I)U$ and $U'=(M_\infty(\lambda_0)-\eta I)U$} each possess exponential dichotomies on $J$, with $\kappa^s = - \mu_{min}+ \eta$, $\kappa^u = \eta$ and $\kappa^s = -\eta$, $\kappa^u = \mu_{min}-\eta$ respectively. 
\end{lemma}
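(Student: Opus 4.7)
The plan is to exploit the fact that both systems are autonomous (constant coefficient), so their dichotomy can be read off directly from a spectral decomposition of the coefficient matrix; the only real work is checking hyperbolicity and identifying the correct exponential rates.

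\textbf{Step 1: Spectrum under the shift.} From the discussion preceding the lemma, the eigenvalues of $M_\infty(\lambda_0)$ are the $2(n-m)$ real numbers $\pm\sqrt{a_i-\lambda_0}$ for $i\ge m+1$ and the $2m$ purely imaginary numbers $\pm i\sqrt{\lambda_0-a_i}$ for $i\le m$, with $\mu_{\min}=\sqrt{a_{m+1}-\lambda_0}$ the smallest positive one. Shifting by $\pm\eta I$ translates every eigenvalue by $\pm\eta$. I would then verify, using $0<\eta<\mu_{\min}$, that
for $M_\infty(\lambda_0)+\eta I$ the stable eigenvalues (negative real part) satisfy $\Re\mu\le -\mu_{\min}+\eta<0$ and the unstable ones $\Re\mu\ge \eta>0$; for $M_\infty(\lambda_0)-\eta I$ the stable eigenvalues satisfy $\Re\mu\le -\eta<0$ and the unstable ones $\Re\mu\ge \mu_{\min}-\eta>0$. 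This simultaneously shows hyperbolicity and extracts the claimed rates $\kappa^s,\kappa^u$.

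\textbf{Step 2: Diagonalizability.} Since $A_\infty$ is real symmetric, $A_\infty=Q^TDQ$ with $D$ diagonal. In the corresponding block basis, $M_\infty(\lambda_0)$ decomposes into $n$ two-by-two blocks $\begin{pmatrix}0&1\\a_i-\lambda_0&0\end{pmatrix}$, each having the two distinct eigenvalues $\pm\sqrt{a_i-\lambda_0}$ (distinct because Assumption~\ref{assump2} gives $a_i\ne\lambda_0$). Hence $M_\infty(\lambda_0)$, and therefore $M_\infty(\lambda_0)\pm\eta I$, is diagonalizable with no Jordan blocks, so there is a basis change $T$ and a diagonal matrix $\Lambda$ with $T^{-1}(M_\infty(\lambda_0)\pm\eta I)T=\Lambda$.

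\textbf{Step 3: Constructing the dichotomy.} Let $P$ be the spectral projection onto the span of the generalized eigenvectors of $M_\infty(\lambda_0)\pm\eta I$ corresponding to eigenvalues with negative real part; note that $P$ is independent of $x_0$ because the equation is autonomous. Define
\begin{equation*}
\Phi^s(x,x_0)U = e^{(M_\infty(\lambda_0)\pm\eta I)(x-x_0)}\,P\,U,\qquad \Phi^u(x,x_0)U = e^{(M_\infty(\lambda_0)\pm\eta I)(x-x_0)}\,(I-P)\,U.
\end{equation*}
The invariance conditions $\Phi^s(x,x_0)U\in\Ran P$ and $\Phi^u(x,x_0)U\in\ker P$ for all $x$ hold because $\Ran P$ and $\ker P$ are invariant under the flow, and $\Phi^s(x_0,x_0)U=PU$, $\Phi^u(x_0,x_0)U=(I-P)U$ by construction.

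\textbf{Step 4: Exponential estimates.} Using the diagonalizing change of basis $T$ from Step 2, the norm of the exponential restricted to each one-dimensional (complex) eigenspace is exactly $e^{\Re(\mu)(x-x_0)}$. Setting $K:=\|T\|\|T^{-1}\|$ and taking the worst eigenvalue in each spectral subspace yields $\|\Phi^s(x,x_0)U\|\le Ke^{\kappa^s(x-x_0)}\|U\|$ for $x\ge x_0$ and $\|\Phi^u(x,x_0)U\|\le Ke^{\kappa^u(x-x_0)}\|U\|$ for $x\le x_0$, with the rates extracted in Step~1. Since $P$ is constant in $x_0$, nothing depends on $J$ beyond the requirement that the relevant inequalities $x\ge x_0$ or $x\le x_0$ live in $J$, so the argument applies uniformly to $J=\mathbb R,\mathbb R_+,\mathbb R_-$.

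There is no serious obstacle; the only thing to be careful about is the arithmetic in Step~1 — one must use $\eta<\mu_{\min}$ to keep the stable eigenvalues of $M_\infty(\lambda_0)+\eta I$ strictly negative and the unstable eigenvalues of $M_\infty(\lambda_0)-\eta I$ strictly positive, and one must identify the \emph{slowest} decaying and \emph{slowest} growing eigenvalues to pin down $\kappa^s$ and $\kappa^u$ exactly as stated.
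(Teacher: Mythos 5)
Your proof is correct and takes essentially the same approach as the paper: both construct the dichotomy from the spectral projections of the constant-coefficient shifted matrix, with $P=P^s$ for the $+\eta$ shift and $P=P^s+P^c$ for the $-\eta$ shift, and read the rates $\kappa^s,\kappa^u$ off the shifted spectrum. Your Step 2 (diagonalizability of $M_\infty(\lambda_0)$ via the $2\times 2$ block reduction and Assumption~\ref{assump2}) is a detail the paper leaves implicit, and it is precisely what justifies the exact stated rates without a polynomial correction or an extra $\epsilon$.
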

\begin{proof}
   For the system $U'=(M_\infty(\lambda_0) + \eta I) U$,
   let
   \begin{equation*}
     \left\{
      \begin{aligned}
       \Psi_\infty^s(x,x_0) &= e^{(M_\infty(\lambda_0) + \eta I)P^s}P^s, \\
       \Psi_\infty^{cu}(x,x_0) &= e^{(M_\infty(\lambda_0) + \eta I)(P^u + P^c)}(P^u + P^c).
      \end{aligned}
      \right.
   \end{equation*}
   Then the requirements of Definition \ref{D:expdich} are satisfied with $P=P^s$ and $\kappa^s=-\mu_{\min} + \eta<0$, $\kappa^u= \eta>0$.
   
   For the system $U' = (M_\infty(\lambda_0)-\eta I) U$,  let
   \begin{equation*}
     \left\{
      \begin{aligned}
       \Psi^{cs}_{\infty}(x,x_0) &= e^{(M_\infty(\lambda_0) - \eta I)(P^s+P^c)}(P^s+P^c), \\
       \Psi^u_{\infty}(x,x_0) &= e^{(M_\infty(\lambda_0) - \eta I)P^u}P^u.
      \end{aligned}
      \right.
   \end{equation*}
   Then the requirements of Definition \ref{D:expdich} are satisfied with $P= P^s + P^c$ and $\kappa^s = -\eta<0$, $\kappa^u = \mu_{\min}-\eta>0$.
\end{proof}

Next, we study the full system ~\eqref{fullsystem}, which can be expressed as
\begin{equation}\label{fullsystemalternative}
U' = (M_{\infty}(\lambda_0) + L(x;\lambda,B))U
\end{equation}
\noindent where
\begin{equation}\label{matrixL}
L(x;\lambda,B) = \begin{bmatrix}
0 & 0\\
A(x)- A_{\infty}+(\lambda_0 - \lambda)I  + B(x)& 0
\end{bmatrix}.
\end{equation}
One of the most important properties that exponential dichotomies possess is their roughness. By that, we mean that they persist even if we add a perturbation in the coefficient matrix, which is small for all large $x$. This property is proved in the following lemma. For more information on the topic we refer to \cite{Coppel78}.

\begin{lemma}{Roughness Theorem}\label{theorem:roughness} \label{T:roughness}
\begin{enumerate}[(i)]
\item If $U'=C(x) U$ possesses an exponential dichotomy on $\mathbb R_+$ with rates $\kappa^s<0<\kappa^u$ and constant $K>0$ as in Definition \ref{D:expdich}, and if for some $R>0$, $|D(x)| < \delta$ for all $x\ge R$, where $\delta\in (0,\min(-\kappa^s,\kappa^u)/(2K)$, then the perturbed system $U'=(C(x)+D(x)) U$ also possesses an exponential dichotomy on $\mathbb R_+$ with rates $\tilde \kappa^s=\kappa^s+2K\delta<0$, $\tilde \kappa^u = \kappa^u - 2K\delta>0$ and some constant $\widetilde K> 0$.
\item If $U'=C(x) U$ possesses an exponential dichotomy on $\mathbb R_-$ with rates $\kappa^s<0<\kappa^u$ and constant $K>0$ as in Definition \ref{D:expdich}, and if for some $R>0$, $|D(x)|<\delta$ for all $x<-R$, where $\delta\in (0,\min(-\kappa^s,\kappa^u)/(2K)$, then  the perturbed system $U'=(C(x)+D(x))U$ also possesses an exponential dichotomy on $\mathbb R_-$, with rates $\tilde \kappa^s=\kappa^u+2K\delta$, $\tilde \kappa^u=\kappa^u-2K\delta$ and some  constant $\widetilde K$.
\end{enumerate}
\end{lemma}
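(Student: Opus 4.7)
The plan is to prove case (i) via a variation-of-parameters fixed-point argument on $[R,\infty)$; case (ii) then follows by the symmetric construction on $\mathbb{R}_-$. Let $\Phi^s(x,x_0)$ and $\Phi^u(x,x_0)$ denote the stable and unstable evolution operators of $U'=C(x)U$ provided by \cref{D:expdich}. For $x_0\ge R$ and $\xi\in\Ran P(x_0)$, I would look for a solution $v$ of the perturbed equation $U'=(C(x)+D(x))U$ that satisfies the integral identity
\[
v(x)=\Phi^s(x,x_0)\xi+\int_{x_0}^{x}\Phi^s(x,s)D(s)v(s)\,ds-\int_{x}^{\infty}\Phi^u(x,s)D(s)v(s)\,ds,
\]
in which the first integral carries the stable component of the forcing and the tail integral subtracts off the would-be unstable contribution so that $v$ remains bounded as $x\to\infty$. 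Denote the right-hand side by $(Tv)(x)$.

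Next, I would realize $T$ as a contraction on the weighted Banach space
\[
\mathcal{B}=\Bigl\{v\in C([x_0,\infty);\mathbb{R}^{2n}):\|v\|_{\mathcal{B}}:=\sup_{x\ge x_0}e^{-\tilde\kappa^s(x-x_0)}|v(x)|<\infty\Bigr\}
\]
with $\tilde\kappa^s=\kappa^s+2K\delta<0$. Using $\|\Phi^s(x,s)\|\le Ke^{\kappa^s(x-s)}$, $\|\Phi^u(x,s)\|\le Ke^{\kappa^u(x-s)}$, and $|D(s)|<\delta$, a direct computation with the weighted norm gives the Lipschitz bound
\[
\|Tv_1-Tv_2\|_{\mathcal{B}}\le K\delta\Bigl(\tfrac{1}{\tilde\kappa^s-\kappa^s}+\tfrac{1}{\kappa^u-\tilde\kappa^s}\Bigr)\|v_1-v_2\|_{\mathcal{B}},
\]
and the hypothesis $\delta<\min(-\kappa^s,\kappa^u)/(2K)$ is exactly what makes this constant strictly less than $1$. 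Banach's fixed-point theorem then yields a unique $\tilde\Phi^s(\cdot,x_0)\xi\in\mathcal{B}$, decaying at the rate $\tilde\kappa^s$.

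The new stable subspace at $x_0\ge R$ is $\{\tilde\Phi^s(x_0,x_0)\xi:\xi\in\Ran P(x_0)\}$; since the fixed point depends linearly and continuously on $\xi$, this subspace is the graph of a linear map of norm $O(\delta)$ from $\Ran P(x_0)$ into $\Ker P(x_0)$. A dual fixed-point problem, posed for $x\le x_0$ and exchanging the roles of $\Phi^s$ and $\Phi^u$, produces the perturbed unstable subspace with growth rate $\tilde\kappa^u=\kappa^u-2K\delta$. These two subspaces remain complementary because each is an $O(\delta)$-perturbation of the original, so $\tilde P(x)$ is well-defined and depends continuously on $x$. To extend from $[R,\infty)$ down to $\mathbb{R}_+$, I would transport these subspaces backward by the flow of $U'=(C(x)+D(x))U$ on the compact interval $[0,R]$; continuity of $C$ and boundedness of $D$ make this extension Lipschitz, inflating the constant from $K$ to some $\widetilde K$ but preserving the exponential rates. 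The main obstacle I anticipate is the rate bookkeeping: the weight $\tilde\kappa^s$ must be chosen precisely so that both integrals in $T$ converge, both estimates reproduce the $2K\delta$ shift stated in the lemma, and the perturbed stable and unstable subspaces have angle bounded away from zero uniformly in $x$. The rest is standard Gronwall-type estimation, and case (ii) follows by the obvious reflection $x\mapsto -x$.
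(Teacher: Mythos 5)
Your route is sound and, in the step that constitutes the actual content of the paper's proof, identical to it: the paper also first obtains the dichotomy on $[R,\infty)$ and then extends it to all of $\mathbb R_+$ by conjugating with the full evolution operator over the compact piece $[0,R]$, setting $\Phi^s(x,x_0)=\Phi(x,R)P(R)\Phi(R,x_0)$ and $\Phi^u(x,x_0)=\Phi(x,R)(I-P(R))\Phi(R,x_0)$ for $0\le x_0<R$, which is exactly your ``transport backward by the flow'' step. The difference lies in the first half: the paper simply invokes the roughness theorem of \cite[p.~34]{Coppel78} for perturbations small on an entire half-line (after the translation $\widetilde x=x-R$), whereas you re-derive it by the weighted variation-of-constants fixed point. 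Your contraction bookkeeping is correct: with $\tilde\kappa^s=\kappa^s+2K\delta$ the two terms of your Lipschitz bound are $K\delta/(\tilde\kappa^s-\kappa^s)=1/2$ and $K\delta/(\kappa^u-\kappa^s-2K\delta)<1/2$, the latter because $4K\delta<2\min(-\kappa^s,\kappa^u)\le \kappa^u-\kappa^s$; and the fixed point does solve the perturbed equation and lands in $\Ran P(x_0)+\Ker P(x_0)$ in graph form. What your sketch leaves open is precisely what the citation supplies: the construction of the perturbed unstable family on the half-line (where the complement is not canonical and the backward estimate with rate $\tilde\kappa^u$ and a constant uniform in $x_0$ must be proved) and the uniform boundedness of the perturbed projections. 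Your justification that the two subspaces ``remain complementary because each is an $O(\delta)$-perturbation of the original'' is not immediate from your estimates: the graph-map norms you actually get carry the factor $K/(1-L)$ with $L$ the contraction constant, which under the stated smallness condition (where $L$ may be close to $1$ and $K$ large) need not be small, so the angle condition requires its own argument--this is the genuinely delicate part of Coppel's proof. So either carry out that step in full or cite it as the paper does; with that supplied, your argument is a self-contained version of the same proof, and disposing of case (ii) by the reflection $x\mapsto -x$ is fine.
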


\begin{proof}
We shall prove the first statement, since the second one can be proven in the same way.

In \cite[p.34]{Coppel78} we see that if an unperturbed system $U'=C(x)U$ has an exponential dichotomy on $\mathbb R_+$  then if $\delta\in (0,\min(-\kappa^s,\kappa^u)/(2K)$ such that if $\sup_{x\ge 0} |D(x)|<\delta$, then 
$U'=(C(x)+D(x))U$ has an exponential dichotomy on $\mathbb R_+$ with the required rates. 

In our case, $|D(x)|$ is not small for all $x\ge 0$, and so the proposition cannot be directly applied. Note however that since $|D(x)|<\delta$ for all $x>R$,  $\sup_{x\ge R}|D(x)|<\delta$ (with $\delta$ as above). Using the above result (with a transformed $x$ variable, $\widetilde x= x-R$), it follows that the perturbed system has an exponential dichotomy for $x\ge R$, and we denote the corresponding operators with $\Phi^s(x,x_0)$ (defined for $x\ge x_0\ge R$) and $\Phi^u(x,x_0)$ (defined for $x_0\ge x\ge R$). 

These dichotomies can be extended to be a dichotomy on the whole of $\mathbb R_+$ as follows:
The evolution operator $\Phi(x,x_0)$ is defined for all $x$, $x_0\in \mathbb R$, and it is defined as the unique solution of
\begin{equation*}
    \left\{
    \begin{aligned}
       \Phi'(x,x_0) &= (C(x)+D(x)) \Phi(x,x_0), \\
       \Phi(x_0,x_0) &= I.
    \end{aligned}
    \right.
\end{equation*}

For $0\le x_0<R$, $x\ge x_0$, we {\em define}
\begin{equation*}
    \Phi^s(x,x_0) = \Phi(x,R)P(R)\Phi(R,x_0),
\end{equation*}
where $P(x)$ is the projection related to the exponential dichotomy for the perturbed system $U'=(C(x)+D(x))U$, and which exists for $x\ge R$ by the above result.
Likewise, for $0\le x<R$, $x_0>x$, we {\em define}
\begin{equation*}
    \Phi^u(x,x_0) = \Phi(x,R)(I-P(R))\Phi(R,x_0).
\end{equation*}
Let $P(x)=\Phi^s(x_0,x_0)$ also for $0\le x_0\le R$.
It is not difficult to check that 
$\Phi^s$, $\Phi^u$ are solutions and that 
$\Phi^s$, $\Phi^u$ have the required properties of Definition \ref{D:expdich}. 
\end{proof}
\begin{lemma}\label{lma2}
 Let $\eta\in (0,\mu_{\min})$ and let $\epsilon>0$ be arbitrary. Then there exists $\delta>0$ such that if $|\lambda-\lambda_0| + \sup_{x\in\mathbb R}\|B(x)\|<\delta$, then he systems
\begin{equation} \label{systemswithL}
V'_{\pm} = (M_{\infty}(\lambda_0) \pm \eta I + L(x;\lambda,B))V_{\pm}
\end{equation}
\noindent possess exponential dichotomies  on $\mathbb{R}_+$ and $\mathbb{R}_-$, respectively.
\begin{enumerate}[(i)]
\item For the case of $V_+$, the system has an exponential dichotomy  on $\mathbb R_+$ with rates $\kappa^s = -\mu_{min}+\eta+\epsilon$, $\kappa^u = \eta-\epsilon$.
\item For the case of $V_-$ on $\mathbb R_-$, the system has an exponential dichotomy with rates: $\kappa^s =  -\eta+\epsilon$ and $\kappa^u= \mu_{min}-\eta-\epsilon$.
\end{enumerate}
\end{lemma}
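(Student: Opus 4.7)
The plan is to view each of the two systems in \eqref{systemswithL} as a small perturbation of the constant-coefficient system treated in \cref{lma1}, and then apply the Roughness Theorem \cref{T:roughness} to transfer the dichotomy and identify the precise rates. Concretely, I would write $V'_{\pm} = (C_{\pm}(x) + D(x)) V_{\pm}$ with $C_{\pm}(x) \equiv M_\infty(\lambda_0) \pm \eta I$ (constant) and $D(x) = L(x;\lambda, B)$, so that the unperturbed system for each sign is exactly the one whose exponential dichotomy was constructed in \cref{lma1}, with constant $K>0$ and rates $(\kappa^s,\kappa^u) = (-\mu_{\min}+\eta,\eta)$ in the $V_+$ case on $\mathbb{R}_+$, and $(\kappa^s,\kappa^u)=(-\eta,\mu_{\min}-\eta)$ in the $V_-$ case on $\mathbb{R}_-$.

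Second, I would choose the smallness threshold so that the Roughness Theorem produces exactly the rates claimed. Setting $\delta_0 := \epsilon/(2K)$ and shrinking $\epsilon$ if necessary so that $\delta_0 < \min(-\kappa^s,\kappa^u)/(2K)$ in both cases, the conclusion of \cref{T:roughness} will read $\tilde\kappa^s = \kappa^s + 2K\delta_0 = \kappa^s + \epsilon$ and $\tilde\kappa^u = \kappa^u - 2K\delta_0 = \kappa^u - \epsilon$, which matches (i) and (ii) respectively.

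Third, I would verify the required tail bound $|D(x)| < \delta_0$ for $|x|$ large. From \eqref{matrixL},
\begin{equation*}
|L(x;\lambda,B)| \;\le\; |A(x)-A_\infty| \;+\; |\lambda-\lambda_0| \;+\; |B(x)|.
\end{equation*}
By \cref{assump1}, $|A(x)-A_\infty| \le \|A-A_\infty\|_{X_\beta}(1+|x|)^{-\beta}\to 0$, so there is $R>0$ with $|A(x)-A_\infty|<\delta_0/2$ for $|x|>R$. Picking $\delta := \delta_0/2$ in the hypothesis of \cref{lma2}, the remaining terms satisfy $|\lambda-\lambda_0| + |B(x)| \le |\lambda-\lambda_0| + \sup_x \|B(x)\| < \delta_0/2$, and therefore $|L(x;\lambda,B)| < \delta_0$ for all $|x|>R$. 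Applying part (i) of \cref{T:roughness} on $\mathbb{R}_+$ to the $V_+$ system and part (ii) on $\mathbb{R}_-$ to the $V_-$ system then finishes the proof.

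The only genuinely non-routine ingredient is the decomposition of $D(x)$ and the use of the $X_\beta$-decay of $A(x)-A_\infty$ to ensure the tail-smallness hypothesis of the Roughness Theorem; everything else is bookkeeping of constants. I do not expect any real obstacle, since $A-A_\infty\in X_\beta$ decays algebraically, $B$ is uniformly bounded by the sup-norm hypothesis, and $|\lambda-\lambda_0|$ is controlled by the same $\delta$. The constants $K$, $\mu_{\min}$, $\eta$, $\epsilon$ and the decay rate of $A-A_\infty$ together pin down an admissible $\delta$, and the rates in (i) and (ii) come out by direct substitution.
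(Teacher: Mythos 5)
Your proposal is correct and follows essentially the same route as the paper, whose proof of \cref{lma2} simply invokes \cref{lma1} together with the Roughness Theorem \cref{T:roughness}; you have just made explicit the decomposition $D(x)=L(x;\lambda,B)$, the tail smallness via the $X_\beta$-decay of $A-A_\infty$ and the smallness of $|\lambda-\lambda_0|+\sup_x\|B(x)\|$, and the bookkeeping $\tilde\kappa^{s,u}=\kappa^{s,u}\pm\epsilon$. No issues beyond the (harmless) implicit requirement $\epsilon<\min(\eta,\mu_{\min}-\eta)$, which you already acknowledge.
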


\noindent For $V_+$ on $\mathbb R_+$, we denote the projections by $P^s(\cdot;\lambda,B)$, and we let $P^{cu}(\cdot;\lambda,B):=I-P^s(\cdot;\lambda,B)$.
We denote the corresponding evolution operators on $\mathbb R_+$ by $\Psi^s(x,x_0;\lambda,B)$ and $\Psi^{cu}(x,x_0;\lambda,B)$.

\noindent For $V_-$ on $\mathbb R_-$, we denote the projections by $P^{cs}(\cdot;\lambda,B)$ and we let $P^u(\cdot,\lambda,B):=I-P^{cs}(\cdot;\lambda,B)$. We denote the corresponding evolution operators on $\mathbb R_-$ by
$\Psi^{cs}(x,x_0;\lambda,B)$ and $\Psi^u(x,x_0;\lambda,B)$.
\begin{proof}
The result follows directly from Lemma \ref{lma1} together with Lemma \ref{T:roughness}.
\end{proof}
\begin{lemma} \label{smoothevolutionop}
The projections $P^s(\cdot;\lambda,B)$, $P^{cu}(\cdot;\lambda,B)$, $P^{cs}(\cdot;\lambda,B)$, $P^u(\cdot;\lambda,B)$  and the corresponding evolution operators $\Psi^s(\cdot,\cdot,\lambda,B)$, $\Psi^{cu}(\cdot,\cdot,\lambda,B)$, $\Psi^{cs}(\cdot,\cdot,\lambda,B)$ and $\Psi^u(\cdot,\cdot,\lambda,B)$ depend smoothly on the parameters $\lambda$ and $B$ in a neighbourhood of $(\lambda,B) \in \mathbb{R} \times X_\beta$.
\end{lemma}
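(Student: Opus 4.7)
The plan is to deduce smoothness from two ingredients: smooth parameter dependence of the full (non-dichotomous) evolution operator, and a Lyapunov--Perron characterisation of the stable and centre-unstable subspaces on a half-line $[R,\infty)$ as graphs whose graph maps depend smoothly on $(\lambda,B)$ via the parametric contraction principle; the projections on $[0,R]$ are then obtained by pushing back via the full evolution. The four cases of the lemma are analogous, so I focus on the $V_+$ system on $\mathbb{R}_+$. First, the coefficient matrix $C(x;\lambda,B):=M_\infty(\lambda_0)+\eta I+L(x;\lambda,B)$ depends smoothly (in fact affinely) on $(\lambda,B)\in\mathbb{R}\times X_\beta$: the assignment $(\lambda,B)\mapsto L(\cdot;\lambda,B)$ is an affine continuous map into $L^\infty(\mathbb{R};\mathbb{R}^{2n\times 2n})$. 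By the classical theorem on smooth dependence on parameters for linear ODE, the full evolution operator $\Phi(x,x_0;\lambda,B)$ of $V'=C(x;\lambda,B)V$ is then $C^\infty$ in $(\lambda,B)$ for every fixed $x,x_0$.

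Next, pick $R>0$ so large and $\delta_0>0$ so small that $|L(x;\lambda,B)|<\delta$ for all $x\ge R$ and all $(\lambda,B)$ with $|\lambda-\lambda_0|+\|B\|_{X_\beta}<\delta_0$, which is possible because $A(\cdot)-A_\infty\in X_\beta$ decays algebraically. I would then characterise the stable subspace of the perturbed system at $x_0=R$ as a graph. Writing $\mathbb{R}^{2n}=X^s\oplus X^{cu}$ with unperturbed projections $P^s_\infty,P^{cu}_\infty$ from \cref{lma1} and evolutions $\Psi^s_\infty(x,y)=e^{(M_\infty(\lambda_0)+\eta I)(x-y)}P^s_\infty$, $\Psi^{cu}_\infty(x,y)=e^{(M_\infty(\lambda_0)+\eta I)(x-y)}P^{cu}_\infty$, variation of constants together with the requirement that the solution decay at the stable rate gives, for each $V_0\in X^s$, the Lyapunov--Perron equation
\begin{equation*}
V(x)=\Psi^s_\infty(x,R)V_0+\int_R^x\Psi^s_\infty(x,y)L(y;\lambda,B)V(y)\,dy-\int_x^{\infty}\Psi^{cu}_\infty(x,y)L(y;\lambda,B)V(y)\,dy,\quad x\ge R,
\end{equation*}
to be solved in the Banach space $Y_\gamma$ of continuous $V:[R,\infty)\to\mathbb{R}^{2n}$ with norm $\|V\|_\gamma=\sup_{x\ge R}e^{-\gamma x}|V(x)|$, for some $\gamma$ strictly between the two dichotomy rates of the unperturbed shifted system of \cref{lma1}. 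For $(\lambda,B)$ in the above neighbourhood, the right-hand side is a uniform contraction in $V$ that is jointly smooth in $(V_0,\lambda,B)$, so the parametric contraction principle yields a unique smooth fixed point $V(\,\cdot\,;V_0,\lambda,B)$. Evaluating at $x=R$ produces $V(R)=V_0+h(\lambda,B)V_0$ with a linear map $h(\lambda,B):X^s\to X^{cu}$ smooth in $(\lambda,B)$, hence the stable subspace $E^s(R;\lambda,B)=\{V_0+h(\lambda,B)V_0:V_0\in X^s\}$ depends smoothly on parameters.

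An analogous fixed-point argument, with the roles of $\Psi^s_\infty,\Psi^{cu}_\infty$ exchanged and the growth/decay requirement adapted to the centre-unstable direction, yields a smooth graph representation of the centre-unstable subspace $E^{cu}(R;\lambda,B)$ over $X^{cu}$. The transversality of these two subspaces implies that the projection $P^s(R;\lambda,B)$ onto $E^s(R;\lambda,B)$ along $E^{cu}(R;\lambda,B)$ is a smooth function of $(\lambda,B)$. For general $x_0\in\mathbb{R}_+$ the invariance relation
\begin{equation*}
P^s(x_0;\lambda,B)=\Phi(x_0,R;\lambda,B)\,P^s(R;\lambda,B)\,\Phi(R,x_0;\lambda,B)
\end{equation*}
transfers the smoothness, and the evolution operators $\Psi^s(x,x_0;\lambda,B)=\Phi(x,x_0;\lambda,B)P^s(x_0;\lambda,B)$ and $\Psi^{cu}(x,x_0;\lambda,B)=\Phi(x,x_0;\lambda,B)(I-P^s(x_0;\lambda,B))$ inherit smoothness from $\Phi$ and from $P^s(R;\,\cdot\,,\,\cdot\,)$. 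The $V_-$ system on $\mathbb{R}_-$ is handled symmetrically, yielding smooth $P^{cs},P^u,\Psi^{cs},\Psi^u$.

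The main obstacle is verifying that the Lyapunov--Perron operator is simultaneously a uniform contraction and $C^\infty$ in $(V_0,\lambda,B)$ on the weighted space $Y_\gamma$. With $\gamma$ placed strictly inside the dichotomy gap, the standard estimates show that the two integral operators in the equation are bounded on $Y_\gamma$ with operator norms controlled by a constant times $\sup_{x\ge R}|L(x;\lambda,B)|$, which can be made arbitrarily small by enlarging $R$ and shrinking $\delta_0$, exploiting the decay $A-A_\infty\in X_\beta$ and the smallness of $\lambda-\lambda_0$ and $B$. The affine dependence of $L$ on $(\lambda,B)$ then makes the Lyapunov--Perron operator $C^\infty$ in the parameters, so the parametric uniform contraction principle delivers the required smooth dependence of all the projections and evolution operators in the statement.
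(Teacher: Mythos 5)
Your proposal is correct and follows essentially the same route as the paper: both rest on characterizing the dichotomy data as the fixed point of a contraction (the Lyapunov--Perron / Coppel integral operator) that depends smoothly, in fact affinely, on $(\lambda,B)$, and then invoking the implicit function theorem or, equivalently, the parametric uniform contraction principle to propagate that smoothness to the fixed point. The paper compresses this into a citation of Coppel's affine map $\mathcal T$ on $L^\infty(\mathbb R^{2n})$; you spell out the Lyapunov--Perron equation on a weighted half-line space, extract the stable and centre-unstable subspaces at $x=R$ as graphs, form the projection as the splitting along two smoothly varying transverse subspaces, and push it to general $x_0$ via the smooth full evolution $\Phi$ --- a more explicit but equivalent packaging of the same argument.
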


\begin{proof}
 In \cite[p.30]{Coppel78} it has been shown that exponential dichotomies can be expressed as fixed points of a specific affine map $\mathcal T:L^{\infty}(\mathbb R^{2n})\to L^{\infty}(\mathbb R^{2n})$, where $\mathcal T$ is proved to be a contraction if the matrix $D(x)$ given in Lemma \ref{theorem:roughness} is small enough in the $L^\infty$ norm. Hence $I-T$ is invertible in a neighbourhood of $(\lambda_0,0)$.  But the operator depends smoothly on $(\lambda,B)$, since our matrix $L(x;\lambda,B)$ is smooth. This allows us to apply the implicit function theorem and obtain that the solutions to this equation depend smoothly on the parameters $\lambda$ and $B$. 
\end{proof}
Now using ~\cref{lma2} we can define the following evolution operators for the system (6) on $\mathbb{R}_+$ and $\mathbb{R}_-$:

On $\mathbb R_+$, we will use the operators $\Phi^s$ and $\Phi^{cu}$ defined by

\begin{equation*}
   \begin{aligned}
\Phi^s(x,x_0;\lambda, B) &= e^{-\eta(x-x_0)}\Psi^s(x,x_0;\lambda,B), \\
\Phi^{cu}(x,x_0;\lambda,B) &= e^{-\eta(x-x_0)}\Psi^{cu}(x,x_0;\lambda,B).
   \end{aligned}
\end{equation*}

On $\mathbb R_-$, we will use the operators $\Phi^{cs}$ and $\Phi^u$ defined by

\begin{equation*}
\begin{aligned}
   \Phi^{cs}(x,x_0;\lambda,B) &= e^{\eta(x-x_0)}\Psi^{cs}(x,x_0;\lambda,B), \\
    \Phi^u(x,x_0;\lambda,B) &= e^{\eta(x-x_0)}\Psi^u(x,x_0;\lambda,B).
   \end{aligned}
\end{equation*}

\section{Exponential decay of eigenfunctions}
\noindent In this section we show that every eigenfunction is exponentially decaying as $|x| \to \infty$.

We first give an expression for the bounded solutions of the eigenvalue equation ~\eqref{fullsystem}. This formula is then used to prove that eigenfunctions decay exponentially.
\begin{lemma}\label{lma4}
Let $U$ be a solution of ~\eqref{fullsystem}. 
\begin{enumerate}[(i)]
   \item{If $U$ is bounded on $\mathbb{R}_+$, then for every $R \geq 0$, there exists a $U_0^s \in X^s$, and a $U_0^c \in X^c$ such that for all $x \geq R$
\begin{equation*}
  \begin{aligned}
    U(x)=e^{M_\infty P^s(x-R)}U_0^s&+e^{M_\infty P^c x}U_0^c+\int_R^x e^{M_\infty P^s(x-\xi)}P^s L(\xi;\lambda_0,B)U(\xi)\, d\xi \\
    &-\int_x^\infty e^{M_\infty P^{cu}(x-\xi)}P^{cu} L(\xi;\lambda_0,B)U(\xi)\, d\xi,
  \end{aligned}
\end{equation*}
where $X^c=X^c(\lambda)$, $X^s=X^s(\lambda)$ are the closures of the span of eigenvectors of $M_\infty=M_\infty(\lambda)$ corresponding to the purely imaginary and negative eigenvalues of $M_\infty(\lambda)$, respectively.}
\item If $U$ is bounded on $\mathbb R_-$, then for every $R \geq 0$, there exists a $V_0^s \in X^s$, and a $V_0^c \in X^c$ such that for all $x \leq -R$
\begin{equation*}
  \begin{aligned}
    U(x)=e^{M_\infty P^u(x+R)}V_0^u&+e^{M_\infty P^c x}V_0^c-\int_x^{-R} e^{M_\infty P^u(x-\xi)}P^u L(\xi;\lambda_0,B)U(\xi)\, d\xi \\
    &+\int_{-\infty}^x e^{M_\infty P^{cs}(x-\xi)}P^{cs} L(\xi;\lambda_0,B)U(\xi)\, d\xi.
  \end{aligned}
\end{equation*}
\end{enumerate}
\end{lemma}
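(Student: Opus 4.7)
The strategy is to apply variation of parameters to the inhomogeneous equation $U' = M_\infty U + L(x;\lambda_0,B) U$, and then project onto the spectral subspaces $X^s$, $X^c$, $X^u$ of $M_\infty = M_\infty(\lambda_0)$. I will focus on part (i); part (ii) follows by the symmetric argument obtained via the change of variable $\widetilde x = -x$ (with the roles of $P^s$ and $P^u$ swapped).

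First, for $x \ge R$ use the standard variation of parameters representation
\begin{equation*}
U(x) = e^{M_\infty (x-R)} U(R) + \int_R^x e^{M_\infty(x-\xi)} L(\xi;\lambda_0,B) U(\xi)\, d\xi,
\end{equation*}
and split $I = P^s + P^c + P^u$. The stable component yields the desired form immediately with $U_0^s := P^s U(R)$, since $P^s$ commutes with $M_\infty$ and $e^{M_\infty P^s t}$ decays for $t\ge 0$. For the unstable component, naive forward propagation produces a growing contribution, so I use the boundedness of $U$ on $\mathbb{R}_+$: writing the projected identity for $y > R$, multiplying by $e^{-M_\infty P^u(y-R)}$ (which decays exponentially as $y \to \infty$), and sending $y \to \infty$ yields
\begin{equation*}
P^u U(R) = -\int_R^\infty e^{-M_\infty P^u(\xi-R)} P^u L(\xi;\lambda_0,B) U(\xi)\, d\xi.
\end{equation*}
Substituting back and cancelling the $R$-dependent terms gives $P^u U(x) = -\int_x^\infty e^{M_\infty P^u(x-\xi)} P^u L\,U\, d\xi$. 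For the center component, $P^c U(x) = e^{M_\infty P^c(x-R)} P^c U(R) + \int_R^x e^{M_\infty P^c(x-\xi)} P^c L\,U\, d\xi$ can be rewritten in the form $e^{M_\infty P^c x} U_0^c - \int_x^\infty e^{M_\infty P^c(x-\xi)} P^c L\,U\, d\xi$ by defining
\begin{equation*}
U_0^c := e^{-M_\infty P^c R} P^c U(R) + \int_R^\infty e^{-M_\infty P^c \xi} P^c L(\xi;\lambda_0,B) U(\xi)\, d\xi.
\end{equation*}
Summing the three projections and merging the $P^c$ and $P^u$ tail integrals into a single $P^{cu}$ integral produces the formula claimed in (i).

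The main delicate point is convergence of the improper integrals over $[R,\infty)$ together with vanishing of the boundary term at $y=\infty$. On the unstable subspace the matrix exponential $e^{M_\infty P^u(x-\xi)}$ decays exponentially for $\xi>x$, so the integral converges absolutely whenever $U$ is bounded, and the boundary term vanishes for the same reason. On the center subspace $e^{\pm M_\infty P^c \xi}$ is only bounded, so convergence of the integral defining $U_0^c$ relies on the algebraic decay $|L(\xi;\lambda_0,B)| = O((1+|\xi|)^{-\beta})$ with $\beta>1$ coming from Assumption~\ref{assump1} and the fact that $B \in X_\beta$; this is precisely where the hypothesis $\beta>1$ (flagged right after the definition of $X_\beta$) is used.
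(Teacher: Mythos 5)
Your proposal is correct and follows essentially the same route as the paper: variation of constants combined with the spectral projections $P^s$, $P^c$, $P^u$, using boundedness of $U$ to eliminate the unstable initial data and the algebraic decay of $L$ (via $\beta>1$) to make sense of the centre part. The only cosmetic difference is that you keep the basepoint fixed at $R$ and define $U_0^c$ by an explicit convergent integral, whereas the paper lets the basepoint $x_0\to\infty$ and obtains $U_0^c$ as a limit; the two manipulations are equivalent.
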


\begin{proof}
We only prove (i), since (ii) can be proved in a similar manner.

We begin with the full ODE ~\eqref{fullsystemalternative} and project using $P^s$, $P^c$ and $P^u$ (where we have suppressed $\lambda$ for convenience):
\begin{equation*}
     \left\{
      \begin{aligned}
         P^s U'(x) = M_{\infty}P^sU(x) + P^sL(x;\lambda_0,B)U(x), \\
         P^c U'(x) = M_{\infty}P^cU(x) + P^cL(x;\lambda_0,B)U(x), \\
         P^u U'(x) = M_{\infty}P^uU(x) + P^uL(x;\lambda_0,B)U(x),
      \end{aligned}
      \right.
   \end{equation*}
   where we have used that the projections $P^s$, $P^c$ and $P^u$ commute with $M_\infty$.

We view the above equations as inhomogeneous versions of the system at infinity. Hence the variation of constants formula can be used to express the solutions as
\begin{equation}\label{solutionsVariationofConstants}
     \left\{
      \begin{aligned}
         P^s U(x) = e^{M_\infty P^s(x-x_0)}  P^s U(x_0)+\int_{x_0}^{x}e^{M_\infty P^s(x-\xi)}P^s L(\xi;\lambda_0,B)U(\xi) \,d\xi \\
         P^c U(x) = e^{M_\infty P^c(x-x_0)} P^c U(x_0)+\int_{x_0}^{x}e^{M_\infty P^c(x-\xi)}P^c L(\xi;\lambda_0,B)U(\xi) \,d\xi \\
         P^u U(x) = e^{M_\infty P^u(x-x_0)} P^u U(x_0)+\int_{x_0}^{x}e^{M_\infty P^u(x-\xi)}P^u L(\xi;\lambda_0,B)U(\xi) \,d\xi.
      \end{aligned}
      \right.
   \end{equation}
\noindent Note that $P^s U(x)$, $P^c U(x)$ and $P^u U(x)$ are bounded since $\norm{U(x)}$ is bounded as $x \to \infty$.

We first look at $P^u U(x)$ and let $x_0 \to \infty$ in the last equation of ~\eqref{solutionsVariationofConstants}. Since $P^u U(x_0)$ is bounded, the first term converges to $0$ and  it follows that
\begin{equation*}
    P^u U(x) = -\int_{x}^{\infty}e^{M_\infty(\lambda)P^u(x-\xi)}P^u L(\xi;\lambda_0,B)U(\xi) \,d\xi.
\end{equation*}

\noindent Next, we study the equation for $P^c U(x)$. The integral in the second equation of ~\eqref{solutionsVariationofConstants} converges as $ x_0 \to \infty$, which can be shown in the following way: Since $\norm{e^{M_\infty(\lambda)P^c(x_0)}}$ is bounded for $x_0 \in \mathbb{R}$ and since
$L(\xi,\lambda_0,B)U(\xi)=(A(\xi)-A_\infty+B(\xi))u_1$, which implies that $\|L(\xi,\lambda_0,B)U(\xi)\|\le \|A-A_\infty+B\|_{X_\beta}\|U(\xi)\|(1+\xi)^{-\beta}$,
%$\norm{L(\xi;B)}(1+|\xi|)^\beta \leq {\co \|B\|_{X_\beta}}$ %where $C$ is a constant 
%and we recall $\beta>1$. 
% I rea
%The decay property of $L(x;B)$ follows {\co since $B\in X_\beta$, see Assumption 1 of Section 2.} More specifically, for the integral we get the following:
we have
\begin{equation}\label{eqEstimate}
\begin{aligned}
  \int_{x}^{\infty} \|L(\xi;\lambda_0,B)U(\xi)\| \,d\xi \leq (\|A-A_\infty\|_{X_\beta}+\|B\|_{X_\beta})\sup_{\xi\ge x}\|U(\xi)\| \int_{x}^{\infty} {(1+\xi)^{-\beta}} \,d\xi \\
    \le \frac{1}{\beta-1}(\|A-A_\infty\|_{X_\beta}+\|B\|_{X_\beta}) \|U\| \frac{1}{(1+x)^{\beta-1}}.
\end{aligned}
\end{equation}
\noindent For the other term of the same equation of ~\eqref{solutionsVariationofConstants}, we observe that the limit 
\begin{equation*}
    \lim_{x_
0 \to \infty} e^{-M_{\infty}(\lambda)P^c x_0}U(x_0) =: U^c_0
\end{equation*}
 exists since $P^c U(x)$ does not depend on $x_0$ and the integral ~\eqref{eqEstimate} converges.
 
\noindent Thus,
\begin{equation*}
    P^c U(x) = e^{M_\infty P^c x}U^c_0 - \int_{x}^{\infty} e^{M_\infty(\lambda)P^c(x-\xi)}P^c L(\xi;\lambda_0,B)U(\xi) \,d\xi.
\end{equation*}
\noindent For $P^s U(x)$ we choose $x_0=R \geq 0$ so that from ~\eqref{solutionsVariationofConstants} for $x \geq R$ we have
\begin{equation*}
\begin{aligned}
    U(x) = e^{M_\infty P^s(x-R)}U^s(R) + e^{M_\infty P^c x}U^c_0 + \int_{R}^{x} e^{M_\infty P^s(x-\xi)}P^s L(\xi;\lambda_0,B)U(\xi) \, d\xi \\
    - \int_{x}^{\infty} e^{M_\infty P^{cu}(x-\xi)}P^{cu}L(\xi;\lambda_0,B)U(\xi) \, d\xi.
\end{aligned}
\end{equation*}

\noindent Lastly, we write $U^s(R)=U^s_0$ to obtain the desired formula.
\end{proof}

Next, by using ~\cref{lma4} and a contraction mapping argument we shall prove exponential decay of any eigenfunction. 

\begin{lemma}\label{lemma:expdecay}
Let $\lambda$ be an eigenvalue of the perturbed operator ~\eqref{eq:lperturbed} with $\lambda \notin \{\alpha_1,\dots, \alpha_n \}$ and corresponding eigenfunction $\vec{u} \in L^2(\mathbb{R};\mathbb{R}^n)$. Let also $A(x)-A_\infty \in X_\beta$ and $\hat{\kappa} \in (0,\mu_{\min}(\lambda))$. Denote by $U$ the solution of the system $~\eqref{fullsystem}$. Then, there exists a positive constant $K$ such that
\begin{equation*}
\|U(x)\| \leq K e^{-\hat{\kappa}|x|} \quad \text{for all} \;\; x\in \mathbb{R}.
\end{equation*}
\end{lemma}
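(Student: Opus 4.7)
The plan is to combine the integral representation of \cref{lma4} with two ingredients: (i) an $L^2$ eigenfunction must actually vanish at infinity (a regularity bootstrap), which then forces the oscillatory ``center'' component of the representation to vanish; and (ii) once the center component has been eliminated, the remaining integral equation is a contraction in an exponentially-weighted sup-norm on $[R,\infty)$ for $R$ large.

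First I would bootstrap regularity: since $\vec u\in L^2(\mathbb R;\mathbb R^n)$ and $\vec u''=(A(x)+B(x)-\lambda I)\vec u$ with bounded coefficients, $\vec u''\in L^2$, so $\vec u\in H^2(\mathbb R)\hookrightarrow C_0(\mathbb R)$ in one space dimension. Hence $U=(\vec u,\vec u')^T$ is bounded and $\|U(x)\|\to 0$ as $|x|\to\infty$. Next I apply \cref{lma4}(i) (with the spectral data of $M_\infty(\lambda)$ in place of $M_\infty(\lambda_0)$), obtaining, for $x\ge R$,
\[
U(x)=e^{M_\infty P^s(x-R)}U_0^s+e^{M_\infty P^c x}U_0^c+I_1(x)-I_2(x).
\]
Using the estimate \eqref{eqEstimate} together with the exponential decay $\|e^{M_\infty P^s(x-\xi)}\|\le Ce^{-\mu_{\min}(x-\xi)}$ for $\xi\le x$ and the boundedness of $\|e^{M_\infty P^{cu}(x-\xi)}\|$ for $\xi\ge x$, one checks that the first summand, $I_1(x)$ and $I_2(x)$ all tend to $0$ as $x\to\infty$. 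Since $U(x)\to 0$, the oscillatory remainder $e^{M_\infty P^c x}U_0^c$ must also tend to $0$. But this function is Bohr almost-periodic, since $M_\infty P^c$ has purely imaginary spectrum on the finite-dimensional $X^c$; computing the time average
\[
\lim_{T\to\infty}\frac{1}{T}\int_0^T\bigl\|e^{M_\infty P^c x}U_0^c\bigr\|^2\,dx=\|U_0^c\|^2
\]
forces $U_0^c=0$. An analogous application of \cref{lma4}(ii) on $\mathbb R_-$ gives the corresponding $V_0^c=0$.

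With $U_0^c=0$, the integral identity reduces to $U=U_{\mathrm{hom}}+\mathcal K U$ on $[R,\infty)$, where $U_{\mathrm{hom}}(x)=e^{M_\infty P^s(x-R)}U_0^s$ and $\mathcal K$ is the linear integral operator assembled from the kernels $e^{M_\infty P^s(x-\xi)}P^s$, $e^{M_\infty P^u(x-\xi)}P^u$ and $e^{M_\infty P^c(x-\xi)}P^c$ paired with $L(\xi;\lambda,B)$. Fixing $\hat\kappa\in(0,\mu_{\min})$, I would work in the Banach space
\[
E_R=\Bigl\{W\in C([R,\infty);\mathbb R^{2n}):\|W\|_{E_R}=\sup_{x\ge R}e^{\hat\kappa x}\|W(x)\|<\infty\Bigr\}.
\]
A direct weighted estimate using $\|L(\xi)\|\le C(1+\xi)^{-\beta}$ (so $\sup_{\xi\ge R}\|L(\xi)\|\to 0$ as $R\to\infty$) shows $\|\mathcal K\|_{E_R\to E_R}\to 0$, hence $\mathcal K$ is a contraction for $R$ large enough. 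Since $U_{\mathrm{hom}}\in E_R$, the fixed point $U$ lies in $E_R$, which yields $\|U(x)\|\le Ke^{-\hat\kappa x}$ for $x\ge R$; continuity extends the bound to $[0,\infty)$. The symmetric argument on $\mathbb R_-$ via \cref{lma4}(ii) completes the proof.

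I expect the subtlest step to be the vanishing of $U_0^c$: this is the unique place where the global $L^2$ (equivalently, decay-at-infinity) hypothesis enters the argument. Without it one cannot rule out the purely oscillatory center mode. The crucial facts are the finite-dimensionality of $X^c$ and the purely imaginary spectrum of $M_\infty P^c$ on it, which together make $e^{M_\infty P^c x}U_0^c$ almost-periodic, so that the time-averaging argument applies.
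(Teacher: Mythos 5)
Your proof is correct and follows essentially the same strategy as the paper's: use \cref{lma4}(i), show the two integral terms vanish as $x\to\infty$, deduce $U_0^c=0$ from $U(x)\to 0$, and then obtain exponential decay from a contraction argument in an exponentially weighted sup-norm on $[R,\infty)$ for $R$ large (with the symmetric argument on $\mathbb R_-$). You merely make explicit two steps the paper treats briefly --- the Sobolev/regularity bootstrap giving $U(x)\to 0$, and the time-averaging argument forcing $U_0^c=0$, whereas the paper simply uses that the center flow $e^{M_\infty P^c x}$ is uniformly bounded below on $X^c$ so that $e^{M_\infty P^c x}U_0^c\to 0$ implies $U_0^c=0$.
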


\begin{proof}
We will concentrate on the proof for $x \rightarrow +\infty$. The argument for $x \to -\infty$ is similar. Let us first estimate the integrals in ~\cref{lma4} (i). Let
\begin{equation*}
    I_1 = \int_R^x e^{M_\infty P^s(x-\xi)}P^s L(\xi;\lambda_0,B)U(\xi)\; d\xi
\end{equation*}
and
\begin{equation*}
    I_2 = \int_x^\infty e^{M_\infty P^{cu}(x-\xi)}P^{cu} L(\xi;\lambda_0,B)U(\xi)\; d\xi.
\end{equation*}

\noindent The aim is to prove that $I_1 \rightarrow 0$ and $I_2 \rightarrow 0$ as $x\rightarrow \infty$.
Let us start with the integral $I_2$.

\noindent By ~\eqref{eqEstimate} and since $\norm{e^{M_\infty P^{cu}x}P^{cu}} \leq 1$ we get
\begin{equation*}
    \norm{I_2} \leq \frac{1}{\beta-1}(\|A-A_\infty\|_{X_\beta}+\|B\|_{X_\beta})  \frac{1}{(1+x)^{\beta-1}},
\end{equation*}
and so $I_2\to 0$ as $x\to \infty$.
Next we shall estimate the integral $I_1$.
For that, we will use that for any $\alpha \in \mathbb{R}$ the following holds:
\begin{equation*}
    \lim_{r \to \infty} \int_1^r \left(\frac{r}{s} \right)^{\alpha} e^{-(r-s)} \; ds = 1 ,
\end{equation*}
which can be verified for example by the $\infty/\infty$ form of L'H\^opital's rule. 
%In particular we have
%\begin{equation*}
%     \lim_{r \to \infty} \int_1^r\frac{e^s}{s^\alpha}\, ds \bigg/{\frac{e^r}{r^\alpha}} = \lim_{r \to \infty} \frac{e^r}{r^\alpha}\bigg/\left(\frac{e^r}{ r^{\alpha}}\left(1-\frac{\alpha}{r}}\right)\right) = \lim_{r \to \infty} \left( \frac{1}{1-\frac{\alpha}{r}}  \right) = 1.
%\end{equation*} 
This implies that there exists a constant $C>0$ such that for every $r \geq 1$
\begin{equation*}
    \int_1^r \left( \frac{r}{s} \right)^\alpha e^{-(r-s)} \; ds \leq C.
\end{equation*}

Set $\kappa=\mu_{\min(\lambda)}(= \sqrt{a_{m+1}-\lambda})$ and $\kappa(1+\xi)=\tau$. Then by the above, 
\begin{equation*}
    \begin{aligned}
    \norm{I_1} \leq \frac{\norm{A-A_{\infty}}_{X_\beta} +\norm{B}_{X_\beta}}{(1+x)^\beta}\norm{U} \int_R^x e^{-\kappa(x-\xi)} \left(\frac{1+x}{1+\xi} \right)^\beta \; d\xi \\
    = \frac{(\norm{A-A_{\infty}}_{X_\beta} +\norm{B}_{X_\beta}) \norm{U}}{(1+x)^\beta} \int_{\kappa(1+R)}^{\kappa(1+x)} \left( \frac{\kappa(1+x)}{\tau} \right)^\beta e^{-(\kappa(1+x)-\tau)} \; d\tau \\
    \leq C\frac{(\norm{A-A_{\infty}}_{X_\beta} +\norm{B}_{X_\beta} )\norm{U}}{(1+x)^\beta}.
    \end{aligned}
\end{equation*}
\noindent This completes the proof that $I_1$ and $I_2$ converge to zero as $x \to \infty$. Then by ~\cref{lma4} (i), $U(x) \to  U^c_0$ as $x \to \infty$ and since $U(x) \to 0$ as $x \to \infty$ (which holds since $\vec u$ is an eigenfunction), it follows that $U^c_0 = 0$.

\noindent Thus, for $x \geq R \geq 1/\kappa$, the integral equation in ~\cref{lma4}(i) becomes

\begin{equation}\label{solutionlemma4.1}
  \begin{aligned}
      U(x)=e^{M_\infty P^s(x-R)}U^s_0 &+\int_R^x e^{M_\infty P^s(x-\xi)}P^s L(\xi;\lambda_0,B)U(\xi)\; d\xi \\
      &-\int_x^\infty e^{M_\infty P^{cu}(x-\xi)}P^{cu}L(\xi;\lambda_0,B)U(\xi)\; d\xi.
  \end{aligned}
\end{equation}

Next, we define the spaces
\begin{equation*}
Y_\eta=\Set{\widehat{U}}{\|\widehat{U}\|_{Y_\eta}:={\sup_{x \geq R}e^{\eta x}\|\widehat{U}(x)\|<\infty}}, \quad \eta \geq 0.
\end{equation*}
Then for $\eta \in [0,\kappa)$ let $T: Y_\eta \to Y_\eta$ be given by
\begin{equation*}
  \begin{aligned}
     T(U)(x)=e^{M_\infty P^s(x-R)}U^s_0 &+\int_R^x e^{M_\infty P^s(x-\xi)}P^s L(\xi;\lambda_0,B)U(\xi)\; d\xi \\
     &-\int_x^\infty e^{M_\infty P^{cu}(x-\xi)}P^{cu}L(\xi;\lambda_0,B)U(\xi)\; d\xi.  
  \end{aligned}
\end{equation*}
We shall prove that $T$ is a contraction on $Y_\eta$.

\noindent Let $\widehat{U}_1, \widehat{U}_2 \in Y_\eta$. It follows that
\begin{equation*}
\begin{aligned}
\|T\widehat{U}_1-T\widehat{U}_2\|_{Y_\eta} \leq \|\widehat{U}_1-\widehat{U}_2\|_{Y_\eta} \left(\sup_{x \geq R} \int_R^x \norm{e^{M_\infty P^s(x-\xi)}P^s} \norm{L(\xi;\lambda_0,B)}e^{\eta(x-\xi)} \; d\xi \right. \\
\left. + \sup_{x\geq R} \int_x^{\infty} \norm{e^{M_\infty P^{cu}(x-\xi)}P^{cu}} \norm{L(\xi;\lambda_0,B)} e^{\eta(x-\xi)} \; d\xi\right) \\
\leq K \norm{L}\norm{\widehat{U}_1-\widehat{U}_2} \left(\sup_{x\geq R} \frac{1}{(1+x)^\beta} \int_R^x e^{-(\kappa-\eta)(x-\xi)} \left(\frac{1+x}{1+\xi}\right)^\beta \; d\xi \right. \\
\left. + \sup_{x\geq R} \int_x^\infty \frac{1}{(1+\xi)^\beta} \; d\xi \right) \\
\leq K \norm{L}\norm{\widehat{U}_1-\widehat{U}_2} \left(\frac{C}{(\kappa-\eta)(1+R)^\beta}+\frac{1}{(\beta-1)(1+R)^{\beta-1}} \right),
\end{aligned}
\end{equation*}
where $K=\max(\kappa, 1/\kappa)$.
Therefore, by choosing $R$ large enough we have that $T$ is a contraction and there exists a unique fixed point in $Y_\eta$ for any $\eta \in [0,\kappa)$.

\noindent Since $U \in Y_0$ solves ~\eqref{solutionlemma4.1} we conclude that it is a fixed point for $T$ when $\eta=0$. By uniqueness of the fixed point in $Y_0$, those fixed points must be the same. Thus, $U \in Y_\eta$ for any $\eta \in [0,\kappa)$. This leads us to the desired result, that is, $U$ decays exponentially.

\end{proof}

\section{Lyapunov--Schmidt reduction}
\noindent
In this section we prove the main result. For this the Lyapunov-Schmidt reduction method will be used.

Let $\vec{u_*}$ be the eigenfunction associated with the unperturbed problem $\mathcal{L}\vec{u_*}=\lambda_0 \vec{u_*}$. Let us also assume that $\vec{u_*}$ is normalized so that $\int_{-\infty}^{+\infty} \norm{\vec{u_*}(x)}^2=1$. We denote by $U_* = (\vec{u_*},\vec{u_*'})^T$
the particular solution corresponding to the eigenfunction $\vec{u_*}$ of problem ~\eqref{fullsystem} with $B=0$ and $\lambda=\lambda_0$. 

Now let us define the stable and unstable subspaces $E^s_+$ and $E^u_-$ respectively. Roughly speaking, these subspaces consist of the initial conditions for which the solutions of the unperturbed system decay exponentially in forward and backward time (we think of the $x$ variable as time). Here we use ~\cref{lemma:expdecay}. We also note that $E^s_+ \cap E^u_- = \spn{\{U_*(0)\}}$ because $\lambda_0$ is an embedded eigenvalue. More specifically, we define
\begin{equation}\label{StableUnstableSubspaces}
\begin{aligned}
E^s_+ := \{ U \in \mathbb R^{2n}\mid P^s (R;\lambda_0,0)U = U \},\\
E^u_- := \{ U \in \mathbb R^{2n}\mid P^u (-R;\lambda_0,0)U = U \}.
\end{aligned}
\end{equation}
\noindent To find embedded eigenvalues we shall define a mapping $\iota: E^s_+ \times E^u_- \times \mathbb{R} \times X_\beta \to \mathbb{R}^{2n}$ such that
\begin{equation}\label{iotadef}
\iota(U^s_0,U^u_0;\lambda,B) = \Phi(0,R;\lambda,B)P^s(R;\lambda,B) U^s_0 - \Phi(0,-R;\lambda,B)P^u(-R;\lambda,B) U^u_0.
\end{equation}

\begin{lemma} \label{lma6}
Let $\lambda \notin \{ \alpha_1,\dots,\alpha_n\}$ and let $\delta>0$ be as in Lemma \ref{lma2}. Then $\lambda$ is an eigenvalue of $\mathcal{L}_B$ if and only if there exist $U^s_0 \in E^s_+$ and $U^u_0 \in E^u_-$ with $(U^s_0,U^u_0) \neq 0$ such that
\begin{equation}\label{iotafunction}
\iota(U^s_0,U^u_0;\lambda,B) = 0.
\end{equation}
\end{lemma}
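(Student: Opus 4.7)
The plan is to recast the eigenvalue condition as a boundary-matching condition at $x = 0$ for a globally exponentially decaying solution of \eqref{fullsystem}. The guiding fact, from \cref{lemma:expdecay}, is that $\lambda$ is an eigenvalue of $\mathcal L + B$ precisely when \eqref{fullsystem} admits a nontrivial solution $U$ decaying exponentially as $|x| \to \infty$. The exponential dichotomies of \cref{lma2} characterise such solutions by their values at $\pm R$: decay at $+\infty$ forces $U(R) \in \operatorname{Ran} P^s(R;\lambda, B)$, while decay at $-\infty$ forces $U(-R) \in \operatorname{Ran} P^u(-R;\lambda, B)$. The map $\iota$ in \eqref{iotadef} evolves these stable and unstable initial data back to the common reference point $x = 0$ and compares them, so $\iota = 0$ is exactly the compatibility condition needed to glue the two one-sided decaying half-solutions into a single global one.

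Before the main argument, a preliminary linear-algebra step is needed in order to parametrise the admissible initial data by the unperturbed subspaces $E^s_+$ and $E^u_-$ rather than by the perturbed ranges. I would show that, for $(\lambda, B)$ sufficiently close to $(\lambda_0, 0)$, the restrictions
\[
P^s(R;\lambda, B)\big|_{E^s_+} \colon E^s_+ \to \operatorname{Ran} P^s(R;\lambda, B), \qquad P^u(-R;\lambda, B)\big|_{E^u_-} \colon E^u_- \to \operatorname{Ran} P^u(-R;\lambda, B)
\]
are linear isomorphisms. At $(\lambda_0, 0)$ each map is the identity on its domain by the definitions \eqref{StableUnstableSubspaces}; by \cref{smoothevolutionop} the projections depend smoothly on the parameters, so for small perturbations the restriction stays close to the identity and is therefore invertible, with target of the same (locally constant) dimension as the domain.

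With this parametrisation in hand, both implications follow quickly. For the forward direction, given an exponentially decaying $U$ I would let $U^s_0 \in E^s_+$ and $U^u_0 \in E^u_-$ be the unique preimages of $U(R)$ and $U(-R)$ under the restricted projections above; evaluating $U$ at $0$ from both sides then yields $\iota(U^s_0, U^u_0;\lambda, B) = 0$, and $(U^s_0, U^u_0) \neq 0$ because $U(R) \neq 0$ by uniqueness of solutions of the linear ODE, together with the injectivity of the restricted projection. Conversely, given $(U^s_0, U^u_0) \neq 0$ with $\iota = 0$, I would define
\[
U(x) := \begin{cases} \Phi(x, R;\lambda, B)\, P^s(R;\lambda, B)\, U^s_0, & x \geq 0, \\ \Phi(x, -R;\lambda, B)\, P^u(-R;\lambda, B)\, U^u_0, & x \leq 0; \end{cases}
\]
the hypothesis $\iota = 0$ is precisely continuity at $x = 0$, so $U$ is a classical solution of \eqref{fullsystem} on all of $\mathbb R$. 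Exponential decay at $\pm\infty$ then follows from the stable/unstable estimates of \cref{lma2}, and $U$ is nontrivial by the injectivity half of the isomorphism claim; its first $n$ components therefore furnish the desired $L^2$ eigenfunction.

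The main obstacle will be the preliminary isomorphism step: one has to exploit genuine smallness of $\|B\|_{X_\beta}$ and $|\lambda - \lambda_0|$, rather than mere continuity of the projections at $(\lambda_0, 0)$, to conclude uniform invertibility of the restrictions. Once that parametrisation is secured, the two directions are essentially formal consequences of the definition of $\iota$ and of the characterisation of exponentially decaying solutions by the stable and unstable subspaces provided by the dichotomies.
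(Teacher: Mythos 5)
Your argument is correct and reaches the same conclusion, but the technical device you use to pass between the fixed subspaces $E^s_+$, $E^u_-$ and the perturbed ranges is different from the paper's. You propose a preliminary isomorphism lemma: for $(\lambda,B)$ near $(\lambda_0,0)$, the restrictions $P^s(R;\lambda,B)|_{E^s_+}$ and $P^u(-R;\lambda,B)|_{E^u_-}$ are linear isomorphisms onto the perturbed stable and unstable ranges, justified by smoothness of the projections (\cref{smoothevolutionop}) together with a smallness and dimension-count argument. The paper bypasses this entirely by invoking the algebraic identity $P^s(R;\lambda,B)P^s(R;\lambda_0,0)=P^s(R;\lambda,B)$ from Coppel's roughness construction, which yields $P^s(R;\lambda,B)U^s_0=U(R)$ at once for the explicit choice $U^s_0:=P^s(R;\lambda_0,0)U(R)$ (and symmetrically on the unstable side), with no further smallness needed beyond what the roughness theorem already requires. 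The two choices of $(U^s_0,U^u_0)$ coincide once your isomorphism is in place, so the arguments are genuinely equivalent; yours is more self-contained and has the side benefit of making explicit, via the injectivity half of your claim, why the glued solution in the direction ``$\iota=0\Rightarrow$ eigenvalue'' is nontrivial, a point the paper leaves tacit. One small streamlining in that direction: $\Phi(\cdot,R;\lambda,B)P^s(R;\lambda,B)U^s_0$ and $\Phi(\cdot,-R;\lambda,B)P^u(-R;\lambda,B)U^u_0$ are both global solutions of \eqref{fullsystem}, so agreeing at $x=0$ forces them to agree on all of $\mathbb R$ by uniqueness; your piecewise formula therefore already defines a single smooth solution, and no separate regularity matching at the origin is required.
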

\begin{proof}
Let ~\eqref{iotafunction} hold. Then by definition
\begin{equation*}
\Phi(0,R;\lambda,B)P^s(R;\lambda,B)U^s_0 = \Phi(0,-R;\lambda,B)P^u(-R;\lambda,B)U^u_0,
\end{equation*}
which implies that the solution of ~\eqref{fullsystem}  with initial value
\begin{equation*}
U(0):=\Phi(0,R;\lambda,B)P^s(R;\lambda,B)U^s_0 = \Phi(0,-R;\lambda,B)P^u(-R;\lambda,B)U^u_0
\end{equation*}
decays exponentially as $x\to +\infty$ as well as as $x\to-\infty$.

It follows that $\lambda$ is an eigenvalue of the perturbed operator $\mathcal{L}_B$ and the corresponding eigenfunction is the first component of $U(0)$.

Conversely, if $\lambda$ is an eigenvalue of the perturbed operator, then from ~\cref{lemma:expdecay} we have that ~\eqref{fullsystem} has a solution $U$ which decays exponentially as $|x| \to \infty$. 
Take 
\begin{equation*}
  \begin{aligned}
    U_0^s&:= P^s(R;\lambda_0,0) U(R), \\
    U_0^u&:= P^u(-R;\lambda_0,0) U(-R).
  \end{aligned}
\end{equation*}
Then $U_0^s$ and $U_0^u$ belong to $E^s_+$ and $E^u_-$ respectively, see ~\eqref{StableUnstableSubspaces}.
By the fact that $P^s(R;\lambda,B)P^s(R;\lambda_0,0)=P^s(R;\lambda,B)$ (see \cite[p.34]{Coppel78}) 
%\todo[color=green!40]{Does this hold with our definition of the projection?? Did you check this? Sara: The definitions are equivalent and since in Coppel the projections related to the exp. dich. of the unperturbed and perturbed systems have the same range and nullspace and QP=Q (where Q:projection of perturbed and P:projection of unperturbed). So I believe it holds. Do you agree? No, I don't think so, since the kernels of the projections for the system at infinity is different for $\lambda$ and $\lambda_0$. This has nothing to do with that Coppel has a slightly different perturbation, but that the eigenspaces for the system at infinity changes with $\lambda$.}
and by (iii) of Definition \ref{D:expdich} we obtain
\begin{equation*}
\Phi(0,R;\lambda,B)P^s(R;\lambda,B)U^s_0 = U(0) = \Phi(0,-R;\lambda,B)P^u(-R,\lambda,B)U^u_0
\end{equation*}
and thus equation ~\eqref{iotafunction} holds.
\end{proof}
Now let us focus on solving ~\eqref{iotafunction}. First note that for any $(U^s_0,U^u_0) \in E^s_+ \times E^u_- $ we have
\begin{equation*}
 \iota(U^s_0,U^u_0;\lambda_0,0) =  \Phi(0,R;\lambda_0,0)U^s_0 - \Phi(0,-R;\lambda_0,0)U^u_0.
\end{equation*}
Hence, $\Ran \iota(\cdot,\cdot, \lambda_0,0)= \Phi(0,R;\lambda_0,0) E^s_+ +\Phi(0,-R;\lambda_0,0) E^u_- $.

Next, let us focus on the codimension of $E^s_+ + E^u_- $.
\begin{lemma} \label{lemmacodim}
We have 
\begin{gather*}
\codim(\Phi(0,R;\lambda_0,0) E^s_+ + \Phi(0,-R;\lambda_0,0)E^u_-) = 2m + 1.
\end{gather*}
\end{lemma}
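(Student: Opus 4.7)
The plan is to compute $\dim E^s_+$ and $\dim E^u_-$ directly from the eigenvalue structure of $M_\infty(\lambda_0) \pm \eta I$, to identify the subspaces $F^s := \Phi(0,R;\lambda_0,0) E^s_+$ and $F^u := \Phi(0,-R;\lambda_0,0) E^u_-$ as the genuine stable and unstable subspaces of the unperturbed system at $x=0$, and then to apply the elementary formula $\dim(F^s + F^u) = \dim F^s + \dim F^u - \dim(F^s \cap F^u)$ after identifying the intersection with $\spn\{U_*(0)\}$.

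First I would carry out the dimension count. The eigenvalues of $M_\infty(\lambda_0) + \eta I$ are $\eta \pm \sqrt{a_i-\lambda_0}$ for $i \ge m+1$ and $\eta \pm i\sqrt{\lambda_0-a_i}$ for $i \le m$. Since $\eta \in (0,\mu_{\min})$, exactly $n-m$ of them (the real numbers $\eta - \sqrt{a_i-\lambda_0}$, $i\ge m+1$) lie in the open left half-plane, while the remaining $n+m$ lie in the open right half-plane. By \cref{lma1} and the Roughness Theorem \ref{T:roughness}, the stable projection $P^s(R;\lambda_0,0)$ has the same rank as the asymptotic spectral projection, so $\dim E^s_+ = n-m$. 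An entirely symmetric count for $M_\infty(\lambda_0) - \eta I$ on $\mathbb R_-$ gives $\dim E^u_- = n-m$. Because $\Phi(0,\pm R;\lambda_0,0)$ are fundamental solution operators of a linear ODE, they are invertible on $\mathbb R^{2n}$, and hence $\dim F^s = \dim F^u = n-m$.

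Second, I would identify $F^s$ with the set of $U_0 \in \mathbb{R}^{2n}$ for which the solution of $U' = M(x;\lambda_0,0)U$ with $U(0) = U_0$ decays exponentially as $x \to +\infty$, and analogously for $F^u$ as $x \to -\infty$. This goes through the substitution $U(x) = e^{-\eta x}V(x)$, which turns the unperturbed full system into the shifted system $V' = (M_\infty(\lambda_0) + \eta I + L(x;\lambda_0,0))V$ covered by \cref{lma2}. A solution decays at $+\infty$ in $U$ precisely when the corresponding $V$-orbit lies in the stable subspace of the shifted dichotomy (solutions outside lie in the unstable/center subspace, whose $V$-growth $\gtrsim e^{(\eta-\epsilon)x}$ beats the damping factor $e^{-\eta x}$ and so $U$ does not decay). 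Thus $F^s$ coincides with the true stable subspace of the unperturbed system at $x=0$ and $F^u$ with the true unstable subspace at $x=0$.

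Third, I would compute the intersection. If $U_0 \in F^s \cap F^u$, then the resulting solution of $U' = M(x;\lambda_0,0)U$ decays exponentially at both $\pm\infty$, so its first component is an $L^2$-eigenfunction of $\mathcal L$ at $\lambda_0$; conversely, by \cref{lemma:expdecay}, every such eigenfunction corresponds to an initial datum in $F^s \cap F^u$. Since $\lambda_0$ is a simple (self-adjoint semisimple) eigenvalue with eigenfunction $\vec u_*$, we get $F^s \cap F^u = \spn\{U_*(0)\}$ and so $\dim(F^s \cap F^u) = 1$. Consequently
\begin{equation*}
\dim(F^s + F^u) = (n-m) + (n-m) - 1 = 2n - 2m - 1,
\end{equation*}
which gives $\codim(F^s + F^u) = 2n - (2n-2m-1) = 2m+1$, as claimed.

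The main obstacle is the identification in the second step: one must confirm that the $(n-m)$-dimensional stable subspace produced by the $\eta$-shifted dichotomy captures \emph{exactly} the decaying solutions of the original unshifted system, with no spurious directions lost or gained when the rescaling $U = e^{-\eta x}V$ is undone. Everything else — the dimension count and the intersection identification — is a direct consequence of the earlier lemmas.
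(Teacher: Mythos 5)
Your proof is correct and follows essentially the same route as the paper: count $\dim E^s_+ = \dim E^u_- = n-m$ from the spectral data of $M_\infty(\lambda_0)\pm\eta I$ via the roughness of the dichotomy, identify the intersection (after flowing to $x=0$) as $\spn\{U_*(0)\}$ by simplicity of $\lambda_0$, and apply the Grassmann formula. You are in fact a little more careful than the paper itself, which writes ``$E^s_+\cap E^u_-$'' for what must really be $\Phi(0,R;\lambda_0,0)E^s_+ \cap \Phi(0,-R;\lambda_0,0)E^u_-$ (the two projections live at different base points), and which silently uses the simplicity of $\lambda_0$; you make both points explicit.

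One imprecision in your second step is worth flagging, although it does not damage the conclusion. You argue that if the $V$-orbit has a nonzero center-unstable component, then its growth $\gtrsim e^{(\eta-\epsilon)x}$ ``beats'' the damping $e^{-\eta x}$, so $U$ does not decay. But $e^{(\eta-\epsilon)x}\cdot e^{-\eta x} = e^{-\epsilon x}\to 0$, so that lower bound is still a decaying one; the roughness theorem, applied after the $\eta$-shift, only resolves rates up to an arbitrary but fixed $\epsilon>0$ and therefore cannot by itself distinguish genuinely bounded (center-type) orbits from ones decaying at the very slow rate $\epsilon$. Fortunately the proof only needs one direction of your identification: $U_0\in F^s$ does imply exponential decay of $U$ at $+\infty$ (at rate roughly $\mu_{\min}-\epsilon$), and conversely the eigenfunction $U_*$ decays fast enough (Lemma~\ref{lemma:expdecay}) that $e^{\eta x}U_*(x)$ stays bounded, placing $U_*(0)\in F^s\cap F^u$. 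With simplicity of $\lambda_0$ this yields $F^s\cap F^u=\spn\{U_*(0)\}$, and the dimension count is unaffected. So the stated ``precisely when'' is stronger than what you actually established, but the weaker pair of inclusions is all that the lemma requires.
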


\begin{proof}
We first observe that the number of real negative and positive  eigenvalues of $M_\infty(\lambda_0)$ is $2(n-m)$, since $\dim X^s = n-m = \dim X^u$. Also, let us observe that from \cite[p.34]{Coppel78}, we have that $\dim E^s_+ = \dim X^s$ and $\dim E^u_- = \dim X^u$.
Therefore,
\begin{gather*}
\dim(E^s_+ + E^u_-) = \dim(E^s_+)+\dim(E^u_-)-\dim(E^s_+\cap E^u_-)= 2(n-m)-1.
\end{gather*}
Thus,
\begin{gather*}
\codim(\Phi(0,R;\lambda_0,0)E^s_+ +\Phi(0,-R;\lambda_0,0) E^u_-) = 2n-2n+2m+1 = 2m +1 = \dim(X^c) + 1.
\end{gather*}
\end{proof}
Let $Q$ be a projection in $\mathbb R^{2n}$ onto $\Ran\iota(\cdot,\cdot;\lambda_0, 0) =  \Phi(0,R;\lambda_0,0) E^s_+ +  \Phi(0,-R;\lambda_0,0) E^u_-$. 
Then ~\eqref{iotafunction} can be rewritten in the equivalent form
\begin{align}\label{iotasystem}
\begin{aligned}
Q\iota(U^s_0,U^u_0;\lambda,B) &= 0,\\
(I-Q)\iota(U^s_0,U^u_0;\lambda,B) &= 0.
\end{aligned}
\end{align}
~\cref{lemmacodim} implies that $\dim(\ker Q) = \codim(\Phi(0,R;\lambda_0,0)E^s_+ +\Phi(0,-R;\lambda_0,0) E^u_-) = 2m+1$.

We will start by solving the first equation of \eqref{iotasystem} using the implicit function theorem. In order to find a unique solution, we introduce an extra condition, which fixes one solution among the infinitely many in the one-dimensional subspace of solutions to this equation.
\begin{lemma} \label{lma8}
Let $D$ be a subspace of $E^s_+\times E^u_-$ such that $\operatorname{\rm span} \{(U_*(R),U_*(-R))\} + D = E^s_+\times E^u_-$ and $D\cap \operatorname{\rm span} \{(U^*(R),U^*(-R))\}=\{(0,0)\}$. Then for $(\lambda,B)$ close to $(\lambda_0,0)$, the first equation of \eqref{iotasystem} has a unique solution
\begin{equation*}
(U^s_0,U^u_0) = (U^s_0(\lambda,B),U^u_0(\lambda,B))
\end{equation*}
such that $(U^s_0,U^u_0)-(U_*(0),U_*(0))\in D$.
\end{lemma}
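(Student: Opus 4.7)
The plan is to solve the first equation of \eqref{iotasystem} by the implicit function theorem, parametrising the unknown inside the affine subspace $(U_*(R),U_*(-R))+D$ of $E^s_+\times E^u_-$.

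First I would observe that at the base point $(\lambda,B)=(\lambda_0,0)$, the pair $(U_*(R),U_*(-R))$ is a solution: indeed
\[
\iota(U_*(R),U_*(-R);\lambda_0,0)=\Phi(0,R;\lambda_0,0)U_*(R)-\Phi(0,-R;\lambda_0,0)U_*(-R)=U_*(0)-U_*(0)=0,
\]
and hence $Q\iota=0$ there as well. Since $\iota$ is linear in $(U^s_0,U^u_0)$ for fixed $(\lambda,B)$, its partial derivative in these variables at the base point is the bounded linear map
\[
\mathcal D\colon E^s_+\times E^u_-\to \Ran Q,\qquad \mathcal D(V^s,V^u)=Q\bigl(\Phi(0,R;\lambda_0,0)V^s-\Phi(0,-R;\lambda_0,0)V^u\bigr).
\]

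Next I would identify the kernel and image of $\mathcal D$. By the very definition of $Q$, the image of the unprojected map already lies in $\Ran Q$, so $Q$ acts as the identity on it and $\mathcal D$ is surjective onto $\Ran Q$. The kernel of $\mathcal D$ consists of those $(V^s,V^u)$ for which $\Phi(0,R;\lambda_0,0)V^s=\Phi(0,-R;\lambda_0,0)V^u$, i.e.\ initial data at $0$ of a solution of the unperturbed system that decays exponentially at both $\pm\infty$; by \cref{lemma:expdecay} together with the simplicity of $\lambda_0$ encoded in $E^s_+\cap E^u_-=\spn\{U_*(0)\}$ (already used in \cref{lemmacodim}), this kernel is one-dimensional and equals $\spn\{(U_*(R),U_*(-R))\}$. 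The hypothesis that $D$ is a direct complement of this line then forces $\mathcal D|_D\colon D\to \Ran Q$ to be a linear isomorphism.

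Finally, \cref{smoothevolutionop} ensures that $(U^s_0,U^u_0,\lambda,B)\mapsto Q\iota(U^s_0,U^u_0;\lambda,B)$ is smooth in a neighbourhood of $(U_*(R),U_*(-R),\lambda_0,0)$. Writing $(U^s_0,U^u_0)=(U_*(R),U_*(-R))+W$ with $W\in D$ and setting $F(W;\lambda,B):=Q\iota((U_*(R),U_*(-R))+W;\lambda,B)$, we have $F(0;\lambda_0,0)=0$ and $\partial_W F(0;\lambda_0,0)=\mathcal D|_D$ is an isomorphism onto $\Ran Q$. The implicit function theorem then produces a unique smooth $W=W(\lambda,B)\in D$ near $0$ solving $F(W;\lambda,B)=0$, and $(U^s_0,U^u_0):=(U_*(R),U_*(-R))+W(\lambda,B)$ is the desired solution.

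The main obstacle I anticipate is the identification of $\ker\mathcal D$: it relies on the simple multiplicity of $\lambda_0$ as an embedded eigenvalue and on the equivalence (via \cref{lma6}) between intersections of $E^s_+$ with $E^u_-$ and eigenfunctions of the unperturbed operator. Everything else is a routine affine IFT calculation built on top of the smoothness statement in \cref{smoothevolutionop}.
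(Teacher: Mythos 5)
Your proposal is correct and follows essentially the same route as the paper: both arguments note that $Q\iota$ is smooth and affine in $(U_0^s,U_0^u)$, identify the kernel of the linearisation at $(\lambda_0,0)$ with $\spn\{(U_*(R),U_*(-R))\}$ using simplicity of $\lambda_0$ (and the fact that $Q$ is the identity on $\Ran\iota(\cdot,\cdot;\lambda_0,0)$), observe that restricting to the complement $D$ yields an isomorphism onto $\Ran Q$, and invoke the implicit function theorem. One small remark: you consistently normalise with $(U_*(R),U_*(-R))$, which is the correct anchor point in $E^s_+\times E^u_-$; the paper's statement writes $(U_*(0),U_*(0))$ for the same quantity, which seems to be a typo (its own proof switches to $(U_*(R),U_*(-R))$).
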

\begin{proof}
By Lemma ~\ref{smoothevolutionop}, $\iota:E_+^s\times E_-^u\times \mathbb R\times X_\beta \to X$ is smooth with respect to all variables, and so the same holds for $Q\iota$.
    
Note that $Q\iota$ is linear with respect to the first two variables, and that, by Lemma \ref{lemmacodim}, together with the assumption that the eigenvalue $\lambda_0$ is simple,
$\ker Q\iota(\cdot,\cdot;\lambda_0,0) = \operatorname{\rm span}\{(U_*(R),U_*(-R))\}$. Hence
 if $(U_0^s,U_0^u)-(U_*(R),U_*(-R))\in D$, then $Q\iota(U_0^s,U_0^u;\lambda_0,0)=0$ if and only if $(U_0^s,U_0^u)=(U_*(R),U_*(-R))$. 

By taking the restriction of $Q\iota$ to $(D+\{(U_*(R),U_*(-R))\})\times \mathbb R\times X_\beta$, we consider $Q\iota:(D+\{(U_*(0),U_*(0))\})\times \mathbb R\times X_\beta \to \Ran Q$, and then $Q\iota$ with this smaller domain is smooth too. By the above discussion, it follows that $Q\iota(\dot,\cdot;\lambda_0,0)$ with this domain is injective. 

By the definition of $Q$ and $\iota$, 
clearly $Q\iota(\cdot,\cdot;\lambda_0,0):E_+^s\times E_-^u \to \Ran Q$ is surjective. Since $E^s_+\times E^u_-=D\oplus \operatorname{\rm span} \{(U_*(R),U_*(-R))\}$, and 
$\operatorname{\rm span} \{(U_*(R),U_*(-R))\} = \ker \iota(\cdot,\cdot;\lambda_0,0)$, 
it follows that $Q \iota(\cdot,\cdot;\lambda_0,0)$ is surjective also as a function from the smaller domain $D+\{(U_*(R),U_*(-R))\}$.
    
Hence, by the implicit function theorem, the claim follows.
\end{proof}
By the integral formula derived in the proof of the roughness theorem \cite[p.30]{Coppel78}, we have the following formula for the first term of the right hand side of \eqref{iotadef},
\begin{equation*}
  \begin{aligned}
    \Phi(0,R;\lambda,B) & P^s(R;\lambda,B) = \Phi(0,R;\lambda_0,0) P^s(R;\lambda_0,0) \\
    &- \int_0^R \Phi(0,\xi;\lambda_0,0) P^s(\xi;\lambda_0,0) N(\xi;\lambda,B)\Phi(\xi,R;\lambda,B)P^s(R;\lambda,B)\, d\xi \\
    &- \int_0^\infty \Phi^u(0,\xi;\lambda_0,0) N(\xi;\lambda,B) \Phi(\xi,R;\lambda,B)P^s(R;\lambda,B)\, d\xi,
  \end{aligned}
\end{equation*}
where 
\begin{equation*}
    N(\xi;\lambda,B)= \begin{bmatrix}
       0 & 0\\
       B(\xi)-(\lambda-\lambda_0) I & 0
\end{bmatrix}.
\end{equation*}
Similarly, for the second term of \eqref{iotadef}, we have
\begin{equation*}
    \begin{aligned}
      \Phi(0,-R;\lambda,B) & P^u(-R;\lambda,B) = \Phi(0,-R;\lambda_0,0) P^u(-R;\lambda_0,0) \\
    &+ \int_{-R}^0 \Phi(0,\xi;\lambda_0,0) P^u(\xi;\lambda_0,0) N(\xi;\lambda,B)\Phi(\xi,-R;\lambda,B)P^u(-R;\lambda,B)\, d\xi \\
    &+ \int_{-\infty}^0 \Phi^s(0,\xi;\lambda_0,0)N(\xi;\lambda,B) \Phi(\xi,-R;\lambda,B)P^u(-R;\lambda,B)\, d\xi.
    \end{aligned}
\end{equation*}
Combining the last two expressions with \eqref{iotadef}, we obtain the formula
\begin{equation*}
    \begin{aligned}
        \iota(U_0^s,U_0^u;\lambda,B) &=  \Phi(0,R;\lambda_0,0) U_0^s - \Phi(0,-R;\lambda_0,0) U_0^u \\
        &-\int_0^R \Phi(0,\xi;\lambda_0,0) P^s(\xi;\lambda_0,0) N(\xi;\lambda,B)\Phi(\xi,R;\lambda,B) P^s(R;\lambda,B) U_0^s\, d\xi \\
    &- \int_{-R}^0 \Phi(0,\xi;\lambda_0,0) P^u(\xi;\lambda_0,0) N(\xi;\lambda,B) \Phi(\xi,-R;\lambda,B)P^u(-R;\lambda,B) U_0^u\, d\xi \\
    &- \int_0^\infty \Phi(0,\xi;\lambda_0,0)(I-P^s(\xi;\lambda_0,0))N(\xi;\lambda,B) \Phi(\xi,R;\lambda,B)P^s(R;\lambda,B) U_0^s\, d\xi \\
    &- \int_{-\infty}^0  \Phi(0,\xi;\lambda_0,0)(I-P^u(\xi;\lambda_0,0))N(\xi;\lambda,B) \Phi(\xi,-R;\lambda,B)P^u(-R;\lambda,B)U_0^u\, d\xi.
    \end{aligned}
\end{equation*}
In order to solve the second equation of ~\eqref{iotasystem} we define $F:\mathbb{R} \times X_\beta \to \ker Q$ by 
\begin{equation}\label{eqF}
\begin{aligned}
F(\lambda,B) &=  \iota(U^s_0(\lambda,B),U^u_0(\lambda,B);\lambda,B)= (I-Q)\iota(U^s_0(\lambda,B),U^u_0(\lambda,B)) \\
&= 
-\int_0^R (I-Q) \Phi(0,\xi;\lambda_0,0) P^s(\xi;\lambda_0,0) N(\xi;\lambda,B) \Phi(\xi,R;\lambda,B) P^s(R;\lambda,B)U_0^s(\lambda,B)\, d\xi \\
    &- \int_{-R}^0 (I-Q) \Phi(0,\xi;\lambda_0,0) P^u(\xi;\lambda_0,0) N(\xi;\lambda,B) \Phi(\xi,-R;\lambda,B)P^u(-R;\lambda,B)U_0^u(\lambda,B)\, d\xi \\
    &- \int_0^\infty (I-Q) \Phi(0,\xi;\lambda_0,0)(I-P^s(\xi;\lambda_0,0))N(\xi;\lambda,B) \Phi(\xi,R;\lambda,B)P^s(R;\lambda,B) U_0^s(\lambda,B)\, d\xi \\
    &- \int_{-\infty}^0 (I-Q) \Phi(0,\xi;\lambda_0,0)(I-P^u(\xi;\lambda_0,0))N(\xi;\lambda,B) \Phi(\xi,R;\lambda,B)P^u(-R;\lambda,B)U_0^u(\lambda,B)\, d\xi.
\end{aligned}
\end{equation}
We are going to solve the equation $F(\lambda,B)=0$ since it is equivalent to solving ~\eqref{iotasystem}. For that, we define the adjoint equation for $\lambda=\lambda_0$ and $B=0$

\begin{equation}\label{adjointequation}
    W'=-(M_\infty(\lambda_0)+L(x;0))^*W.
\end{equation}

\noindent This system has exponential dichotomies on $\mathbb{R}_+$ and $\mathbb{R}_-$ denoted by $\Psi^s(x,x_0)$, $\Psi^{cu}(x,x_0)$ and $\Psi^u(x,x_0)$, $\Psi^{cs}(x,x_0)$ respectively. Furthermore, the dichotomies of the unperturbed system ~\eqref{fullsystem} with $\lambda=\lambda_0$ and $B=0$, and the ones of the adjoint system ~\eqref{adjointequation} are related in the following way
\begin{equation*}
    \begin{aligned}
    \Psi^s(x,x_0) = \Phi^{cu}(x_0,x)^*, \quad \Psi^{cu}(x,x_0) = \Phi^s(x_0,x)^*,\\
    \Psi^{cs}(x,x_0) = \Phi^{u}(x_0,x)^*, \quad \Psi^u(x,x_0) = \Phi^{cs}(x_0,x)^*.
    \end{aligned}
\end{equation*}
Let us note that $U^\bot_* := (-\vec{u}'_*,\vec{u}_*)^T$ solves the adjoint system ~\eqref{adjointequation} and it decays exponentially as $|x| \to \infty$. Also, for $U^s \in E^s_+$, $U^u \in E^u_-$ we have
\begin{align*}
    \begin{aligned}
      \frac{d}{dx} \langle U^\bot_*(x),\Phi^s(x,0)U^s \rangle = \frac{d}{dx} \langle \Psi^s(x,0)U^\bot_*(0),\Phi^s(x,0))U^s \rangle = 0, \\
      \frac{d}{dx} \langle U^\bot_*(x),\Phi^u(x,0)U^u \rangle = \frac{d}{dx} \langle \Psi^u(x,0)U^\bot_*(0),\Phi^u(x,0)U^u \rangle = 0
    \end{aligned}
\end{align*}
This is easy to check if we apply the product rule and insert the respective differential equation in both cases. Hence $\langle U^\bot_*(x),\Phi^s(x,0)U^s \rangle$ and $\langle U^\bot_*(x),\Phi^u(x,0)U^u \rangle$ are both constant,
and since  $\Phi^s(x,0)U^s\to 0$ as $x\to \infty$ and $\Phi^u(x,0)U^u\to 0$ as $x\to -\infty$, while
$U^\bot_*(x)$ is bounded on $\mathbb R$,
it then follows that $\langle U^\bot_*,U^s+U^u \rangle = 0$. Therefore, for any $U \in X$ we have
\begin{equation} \label{zeroIntegrals}
    \langle U^\bot_*(0),QU \rangle \; = 0 \quad \text{and so also} \quad \langle Q^*U^\bot_*(0),U \rangle \; = 0.
\end{equation}
This implies that $U^\bot_*(0) \in \ker Q^*$.

\begin{lemma} \label{lma9}
The equation $\langle U^\bot_*(0),F(\lambda,B) \rangle \; = 0$ defines a smooth function $\lambda(B)$ in a neighbourhood of $B=0$ such that $\lambda(0)=\lambda_0$. Also, for any $B \in X_\beta$
\begin{equation*}
    \lambda'(0)B = -\int_{-\infty}^{+\infty}(\vec{u}_*(x),B(x)\vec{u}_*(x))_{\mathbb{R}^n} \; dx.
\end{equation*}
\end{lemma}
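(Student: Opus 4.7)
The plan is to apply the implicit function theorem to the scalar function $g(\lambda, B) := \langle U^\bot_*(0), F(\lambda, B)\rangle$ and solve $g(\lambda, B) = 0$ for $\lambda = \lambda(B)$ near $(\lambda_0, 0)$. First note that $N(\xi;\lambda_0, 0) = 0$, so every integrand in \eqref{eqF} vanishes at the base point; hence $F(\lambda_0, 0) = 0$ and in particular $g(\lambda_0, 0) = 0$. Smoothness of $g$ in $(\lambda, B)$ is inherited from \cref{smoothevolutionop} together with the smooth dependence of $U_0^s(\lambda, B), U_0^u(\lambda, B)$ established in \cref{lma8}.

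The central calculation is $\partial_\lambda g(\lambda_0, 0)$. Because $N$ itself vanishes at the base point, applying the product rule to any integrand in \eqref{eqF} annihilates every term except the one in which $\partial_\lambda$ hits $N$, giving $\partial_\lambda N = \bigl(\begin{smallmatrix} 0 & 0\\ -I & 0\end{smallmatrix}\bigr)$. Moreover, at $(\lambda_0, 0)$ one has $U_0^s(\lambda_0, 0) = U_*(R)$, $U_0^u(\lambda_0, 0) = U_*(-R)$, and $\Phi(\xi, R;\lambda_0, 0) P^s(R;\lambda_0, 0) U_*(R) = U_*(\xi)$, with the analogous identity on the unstable side. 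So $\partial_\lambda F(\lambda_0, 0)$ is a sum of four explicit integrals against $\partial_\lambda N\, U_*(\xi)$, each carrying one of the projections $P^s(\xi)$, $P^u(\xi)$, $I - P^s(\xi)$, $I - P^u(\xi)$.

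The main work is to collapse these four integrals into a single Melnikov-type integral after pairing with $U^\bot_*(0)$. This rests on three observations: (a) $Q^* U^\bot_*(0) = 0$ by \eqref{zeroIntegrals}, so $(I - Q)$ may be dropped inside the inner product; (b) the adjoint flow relation gives $\Phi(0, \xi;\lambda_0, 0)^* U^\bot_*(0) = U^\bot_*(\xi)$; and (c) the Wronskian identity $\frac{d}{dx}\langle U^\bot_*(x), \Phi^s(x, \xi) V\rangle = 0$ combined with the exponential decay of both factors as $x \to +\infty$ forces $\langle U^\bot_*(\xi), V\rangle = 0$ for every $V \in \Ran P^s(\xi)$ and every $\xi > 0$, with the analogous statement on $\mathbb{R}_-$. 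Equivalently $P^s(\xi)^* U^\bot_*(\xi) = 0$ for $\xi > 0$ and $P^u(\xi)^* U^\bot_*(\xi) = 0$ for $\xi < 0$, so the first two integrals vanish outright and the third and fourth shed their projection factors, yielding
\begin{equation*}
\langle U^\bot_*(0), \partial_\lambda F(\lambda_0, 0)\rangle = -\int_{-\infty}^{\infty} \langle U^\bot_*(\xi), \partial_\lambda N\, U_*(\xi)\rangle\, d\xi.
\end{equation*}
Using $U^\bot_*(\xi) = (-\vec{u}_*', \vec{u}_*)^T$ and $\partial_\lambda N\, U_*(\xi) = (0, -\vec{u}_*)^T$, the integrand equals $-\|\vec{u}_*(\xi)\|^2_{\mathbb{R}^n}$, and the $L^2$-normalization of $\vec{u}_*$ makes the total nonzero; the implicit function theorem then produces the smooth function $\lambda(B)$ with $\lambda(0) = \lambda_0$.

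An identical reduction applied to $D_B F(\lambda_0, 0) \cdot B$, with $D_B N \cdot B = \bigl(\begin{smallmatrix} 0 & 0\\ B & 0\end{smallmatrix}\bigr)$, yields $\langle U^\bot_*(0), D_B F(\lambda_0, 0) \cdot B\rangle$ as a (nonzero) multiple of $\int_{-\infty}^{\infty} (\vec{u}_*, B\vec{u}_*)_{\mathbb{R}^n}\, dx$, and solving $\partial_\lambda g(\lambda_0,0)\cdot \lambda'(0)B + D_B g(\lambda_0,0)\cdot B = 0$ produces the stated formula for $\lambda'(0)B$. The main obstacle is step (c) above: establishing pointwise orthogonality of $U^\bot_*(\xi)$ to the relevant stable/unstable subspaces. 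This orthogonality is the geometric engine that collapses the four-integral Lyapunov--Schmidt expression into a single Melnikov integral over all of $\mathbb{R}$.
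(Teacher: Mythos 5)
Your argument is correct and matches the paper's: both apply the implicit function theorem to $g(\lambda,B)=\langle U_*^\perp(0),F(\lambda,B)\rangle$, collapse the four Lyapunov--Schmidt integrals to $\int_{-\infty}^\infty\|\vec{u}_*(\xi)\|^2\,d\xi=1$ for $\partial_\lambda g(\lambda_0,0)$, and obtain $\lambda'(0)B$ by implicitly differentiating $g(\lambda(B),B)=0$. The paper discards the first two integrals via \eqref{zeroIntegrals} and silently sheds the $(I-P^s)$ and $(I-P^u)$ factors in the remaining two; your Wronskian-plus-decay argument that $P^s(\xi)^*U_*^\perp(\xi)=0$ for $\xi>0$ (and the unstable analogue on $\mathbb R_-$) supplies exactly the justification the paper leaves implicit.
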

\begin{proof}
We use the notation $F_*(\lambda,B):= \langle U_*^\perp(0),F(\lambda,B)\rangle$. Then we need to solve the equation $F_*(\lambda, B)=0$, where by ~\eqref{eqF} we get
\begin{equation*}
    \begin{aligned}
        F_*(\lambda,B)
        &=  -\int_0^R \langle U_*^\perp(0),\Phi(0,\xi;\lambda_0,0) P^s(\xi;\lambda_0,0) N(\xi;\lambda,B) \Phi(\xi,R;\lambda,B) P^s(R;\lambda,B)U_0^s(\lambda,B)\rangle \, d\xi \\
    &- \int_{-R}^0 \langle U_*^\perp(0),
    \Phi(0,\xi;\lambda_0,0) P^u(\xi;\lambda_0,0) N(\xi;\lambda,B) \Phi(\xi,-R;\lambda,B)P^u(-R;\lambda,B)U_0^u(\lambda,B)\rangle \, d\xi \\
    &- \int_0^\infty \langle U_*^\perp(0),\Phi(0,\xi;\lambda_0,0)(I-P^s(\xi;\lambda_0,0))N(\xi;\lambda,B) \Phi(\xi,R;\lambda,B)P^s(R;\lambda,B)U_0^s(\lambda,B)\rangle \, d\xi \\
    &- \int_{-\infty}^0 \langle U_*^\perp(0),\Phi(0,\xi;\lambda_0,0)(I-P^u(\xi;\lambda_0,0))N(\xi;\lambda,B) \Phi(\xi,R;\lambda,B)P^u(-R;\lambda,B)U_0^u(\lambda,B)\rangle \, d\xi. \\
    \end{aligned}
\end{equation*}

By ~\cref{smoothevolutionop}, $F_*$ is a smooth function of $\lambda \text{ and } B$ in a neighbourhood of $(\lambda_0,0)$ with $F_*(\lambda_0,0) = 0$ and 
\begin{equation*}¨
\begin{aligned}
   \frac{\partial F_*}{\partial \lambda}(\lambda_0,0) &=
    -\int_0^R \langle U_*^\perp(0),\Phi(0,\xi;\lambda_0,0) P^s(\xi;\lambda_0,0) \frac{\partial N}{\partial \lambda} \Phi(\xi,R;\lambda_0,0) P^s(R;\lambda_0,0)U_0^s(\lambda_0,0)\rangle \, d\xi \\
    &- \int_{-R}^0 \langle U_*^\perp(0),
    \Phi(0,\xi;\lambda_0,0) P^u(\xi;\lambda_0,0) \frac{\partial N}{\partial \lambda} \Phi(\xi,-R;\lambda_0,0)P^u(-R;\lambda_0,0)U_0^u(\lambda_0,0)\rangle \, d\xi \\
    &- \int_0^\infty \langle U_*^\perp(0),\Phi(0,\xi;\lambda_0,0)(I-P^s(\xi;\lambda_0,0)) \frac{\partial N}{\partial \lambda} \Phi(\xi,R;\lambda_0,0)P^s(R;\lambda_0,0)U_0^s(\lambda_0,0)\rangle \, d\xi \\
    &- \int_{-\infty}^0  \langle U_*^\perp(0),\Phi(0,\xi;\lambda_0,0)(I-P^u(\xi;\lambda_0,0)) \frac{\partial N}{\partial \lambda} \Phi(\xi,R;\lambda_0,0)P^u(-R;\lambda_0,0)U_0^u(\lambda_0,0) \rangle \, d\xi,
\end{aligned}
\end{equation*}
where
\begin{equation*}
    \frac{\partial N}{\partial \lambda}= \left(
    \begin{matrix}
       0 & 0 \\
       -I & 0 
    \end{matrix}
    \right).
\end{equation*}
The first two integrals are zero because of \eqref{zeroIntegrals} and so
\begin{equation*}
   \begin{aligned}
     \frac{\partial F_*}{\partial \lambda}(\lambda_0,0) &=
     -\int_0^\infty \langle U_*^\perp(\xi), \frac{\partial N}{\partial \lambda} U_*(0)\rangle \, d\xi + \int_{-\infty}^0 \langle U_*^\perp(\xi), \frac{\partial N}{\partial \lambda} U_*(0)\rangle \, d\xi \\
     &= \int_{-\infty}^\infty \|\vec{u}_*(\xi)\|^2\, d\xi = 1
     \end{aligned}
\end{equation*}
since we assumed that $\vec{u}_*$ is normalized.

\noindent Therefore we can now apply the implicit function theorem to solve for $\lambda$. Then $\lambda$ is a function of $B$ in a neighbourhood of $B=0$ with $\lambda(0)=\lambda_0$.

For the second part of the proof, by differentiating the function $F_*(\lambda(B),B)=0$ and evaluating at $B=0$ we derive the desired formula. 
\end{proof}
We proceed with the following lemma which is an application of the calculus of variations lemma and it will be used in the final step of the proof of the main theorem.

\begin{lemma} \label{lma10}
Let $\vec{v}(\xi)$ be continuous function $\mathbb{R} \to \mathbb{R}^n$. If for every $B \in X_\beta$, 
\begin{equation*}
\int_{-\infty}^{+\infty} (\vec{v}(\xi),B(\xi)\vec{u}_*(\xi) )d\xi=0
\end{equation*}
then $\vec{v}=0$.
\end{lemma}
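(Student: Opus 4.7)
The plan is to exploit the freedom in choosing the symmetric matrix-valued test function $B$ by constructing a very specific one that makes the integrand in the hypothesis pointwise nonnegative. For a nonnegative cutoff $\rho\in C_c^\infty(\mathbb R)$, I take
\begin{equation*}
B(\xi):=\rho(\xi)\bigl(\vec{v}(\xi)\vec{u}_*(\xi)^T+\vec{u}_*(\xi)\vec{v}(\xi)^T\bigr).
\end{equation*}
This matrix is symmetric by construction, continuous on $\mathbb R$, and compactly supported, hence an element of $X_\beta$. A short direct calculation gives
\begin{equation*}
(\vec{v}(\xi),B(\xi)\vec{u}_*(\xi))_{\mathbb R^n}=\rho(\xi)\bigl(|\vec{v}(\xi)|^2|\vec{u}_*(\xi)|^2+(\vec{v}(\xi)\cdot\vec{u}_*(\xi))^2\bigr),
\end{equation*}
so the integrand is pointwise nonnegative.

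Applying the hypothesis to this $B$ then says that the integral of a nonnegative continuous function vanishes; letting $\rho$ be a standard bump supported near an arbitrary point $\xi_0$ forces
\begin{equation*}
|\vec{v}(\xi)|^2|\vec{u}_*(\xi)|^2+(\vec{v}(\xi)\cdot\vec{u}_*(\xi))^2\equiv 0\quad\text{on }\mathbb R.
\end{equation*}
Since both summands are nonnegative, this gives $|\vec{v}(\xi)|\,|\vec{u}_*(\xi)|=0$ for every $\xi$, so $\vec{v}(\xi)=0$ at every point where $\vec{u}_*(\xi)\neq 0$.

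It remains to upgrade this to $\vec{v}\equiv 0$, which requires the density of $\{\xi:\vec{u}_*(\xi)\neq 0\}$ in $\mathbb R$. This is the one step that uses the ODE structure rather than purely formal algebra: the companion $U_*=(\vec{u}_*,\vec{u}_*')^T$ solves the linear first order system \eqref{fullsystem} with $B=0$, $\lambda=\lambda_0$, and is nontrivial since $\vec{u}_*$ is a nonzero eigenfunction. If $\vec{u}_*$ vanished on some nonempty open interval $I$, then $\vec{u}_*'$ would also vanish on $I$, so $U_*$ would have an entire zero on $I$; by uniqueness for linear ODEs this would force $U_*\equiv 0$, contradicting nontriviality. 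Hence $\{\vec{u}_*\neq 0\}$ is open and dense in $\mathbb R$, and continuity of $\vec{v}$ promotes the vanishing on that dense set to $\vec{v}\equiv 0$. The only step requiring genuine thought is this density argument; the rest is a symmetric-matrix-valued variant of the standard fundamental lemma of the calculus of variations.
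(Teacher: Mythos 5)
Your proof is correct, and it takes a genuinely different and arguably cleaner route than the paper's. The paper works entrywise: it fixes indices $j,k$, puts a single compactly supported test entry $b_{jk}=b_{kj}$ in $B$, applies the scalar fundamental lemma to deduce $v_j u_{*k}+v_k u_{*j}=0$ for every pair $j,k$, and then runs a two-level case analysis on the zeros of the components $u_{*j}$, falling back on ODE uniqueness to rule out the degenerate case where all components of $\vec{u}_*$ vanish on an interval. You instead build a single symmetric matrix $B(\xi)=\rho(\xi)(\vec{v}\vec{u}_*^T+\vec{u}_*\vec{v}^T)$ that turns the integrand into the manifestly nonnegative quantity $\rho\bigl(|\vec{v}|^2|\vec{u}_*|^2+(\vec{v}\cdot\vec{u}_*)^2\bigr)$, so localizing with bumps forces $|\vec{v}||\vec{u}_*|\equiv 0$ in one stroke; the only remaining step is the density of $\{\vec{u}_*\neq 0\}$, which you get from uniqueness for the first-order companion system exactly as the paper does. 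What your approach buys is economy: the entrywise computation and the nested case distinction collapse into one sum-of-squares identity plus one density argument, and the role of the ODE structure is isolated in a single clean lemma-like observation. What the paper's version buys is that it stays within the classical scalar fundamental lemma of the calculus of variations and does not require one to notice the sum-of-squares structure. Both are valid; yours is shorter and conceptually tighter.

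One small cosmetic point: $B$ as you build it is only continuous (inheriting the regularity of $\vec{v}$ and $\vec{u}_*$), not smooth, but that is fine since $X_\beta$ only requires symmetry and the weighted sup-norm bound, and compact support trivially gives the required decay.
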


\begin{proof}
First, we choose specific $j$ and $k$ and let the entry $b_{jk}(\xi)=b_{kj}(\xi) \neq 0$. We assume that $b_{jk}$ is smooth and with compact support and let all other entries in $B$ be equal to zero. Then 
\begin{equation*}
    \int_{-\infty}^{+\infty}v_j(\xi)b_{jk}(\xi)u_{*k}(\xi)+v_k(\xi)b_{jk}(\xi)u_{*j}(\xi)d\xi = \int_{-\infty}^{+\infty} b_{jk}(\xi)(v_j(\xi)u_{*k}(\xi)+v_k(\xi)u_{*j}(\xi)) d\xi=0.
\end{equation*}
By using the variational calculus fundamental lemma since $b_{jk}$ was arbitrary we get 
\begin{equation} \label{vectoreqlemma5.5}
v_j(\xi)u_{*k}(\xi)+v_k(\xi)u_{*j}(\xi) = 0.
\end{equation}

In particular, if $k=j$ we have $v_j(\xi)u_{*j}(\xi) = 0$. This gives us immediately that $v_j(\xi)=0$ for all $\xi$ such that $u_{*j}(\xi) \neq 0$. 

Let $\xi_0$ be such that $u_{*j}(\xi_0)=0$. Now we have two cases:
\begin{enumerate} [(i)]
    \item  If there is a sequence $\xi_l \to \xi_0$ such that  $u_{*j}(\xi_l) \neq 0$, then by the above $v_j(\xi_l)=0$ for all $l$. Then by continuity we will have that $v_j(\xi_0)=0$.
    \item  There is an interval $I \ni \xi_0$ such that $u_{*j}(\xi)=0$ for all $\xi\in I$. Then by ~\eqref{vectoreqlemma5.5} $v_j(\xi)u_{*k}(\xi) = 0$ for all $k$ and all $\xi\in I$. If for some $k$, $u_{*k}(\xi_0) \neq 0$ then $v_j(\xi_0)=0$. Otherwise, $u_{*k}(\xi_0)= 0$ for all $k$. Then again we have two cases, either there exists a sequence $\xi_l \to \xi_0$ such that for some $k$, $u_{*k}(\xi_l) \neq 0$, in which case $v_j(\xi_0)=0$ by the argument in (i), or for all $k$ there exists an interval $J \ni \xi_0$ such that $u_{*k}(\xi)=0$ for all $\xi\in J$, i.e. $\vec{u_*}(\xi)=0$ for all $\xi\in J$. But since $\vec{u}_*$ solves the equation it has to be zero everywhere by the uniqueness of solutions. This is a contradiction because $\vec{u}_*$ is an eigenfunction. This means that $v_j(\xi_0)=0$.
\end{enumerate}
This shows that $v_j(\xi)=0$ for all $\xi \in \mathbb{R}$. Since $j\in \{1,\dots, n\}$ was arbitrary, it follows that $\vec{v}$ is identically zero.
\end{proof}

We may now proceed with the proof of  ~\cref{thm1}.

\begin{proof}[Proof of ~\cref{thm1}]\label{proof:main}
Since $\ker Q$ is $2m+1$-dimensional by Lemma \ref{lemmacodim}, and we eliminated one condition in Lemma \ref{lma9}, there are now $2m$ conditions left to verify. The adjoint projection $Q^*$ has a $(2m+1)$-dimensional kernel, just like $Q$ itself.  We have seen that $U_*^\perp(0)\in \ker Q^*$. Next, we define $W_k(0)\in \mathbb R^{2n}$, $k=1,\dots, 2m$ such that 
$\{W_k(0);\; k=1,\dots, 2m\} \cup \{U_*^\perp(0)\}$ is a basis for $\ker Q^*$. For $k=1,\dots 2m$, let $W_k$ be the solution of the adjoint unperturbed system with initial value $W_k(0)$.

For $k=1,\dots 2m$, we let
\begin{equation*}
    F_k(B)=\langle W_k(0),F(\lambda(B),B)\rangle,
\end{equation*}
where $F$ is defined in ~\eqref{eqF} and note that $F_k:X_\beta\to \mathbb R$ are smooth functions since $F$ is smooth by ~\cref{smoothevolutionop}.

If for some $B\in X_\beta$, $F_k(B)= 0$ for all $k=1,\dots,2m$, then 
$F(\lambda(B),B)=0$ as $\{W_k(0)\; k=1,\dots, 2m\} \cup \{U_*^\perp(0)\}$ is a basis for $\ker Q^*$. Clearly, the converse statement also holds. 

To prove the theorem, we show that there is a $2m$-dimensional manifold of perturbations $B$ defined by the equations $F_k(B)=0$ for $k=1,\dots,2m$.
With the notation
\begin{equation*}
    U(x,B) = 
    \begin{cases}
       \Phi(x,R,\lambda(B),B) U_0^s(\lambda(B),B) &\text{for }x\ge 0, \\
       \Phi(x,-R,\lambda(B),B) U_0^u(\lambda(B),B) &\text{for }x<0,
    \end{cases}
\end{equation*}
we have
\begin{equation*}
   \begin{aligned}
    F_k(B) &= 
-\int_0^R \langle W_k(0),\Phi(0,\xi;\lambda_0,0) P^s(\xi;\lambda_0,0) (B(\xi) + 
    \lambda(B)-\lambda_0)N U(\xi,B)\rangle \, d\xi \\
    &- \int_{-R}^0 \langle W_k(0), \Phi(0,\xi;\lambda_0,0) P^u(\xi;\lambda_0,0) (B(\xi)
    + \lambda(B)-\lambda_0)N U(\xi,B) \rangle \, d\xi \\
    &- \int_0^\infty \langle  W_k(0),\Phi(0,\xi;\lambda_0,0)(I-P^s(\xi;\lambda_0,0))(B(\xi) +  \lambda(B)-\lambda_0) N U(\xi,B) \rangle \, d\xi \\
    &- \int_{-\infty}^0 \langle W_k(0),\Phi(0,\xi;\lambda_0,0)(I-P^u(\xi;\lambda_0,0))(B(\xi) + \lambda(B)-\lambda_0) N U(\xi,B)\rangle \, d\xi.
\end{aligned}
\end{equation*}
Since $W_k(0)=(I-Q^*(0))W_k(0)$, the first terms above are $0$. Hence
\begin{equation*}
  \begin{aligned}
    F_k(B) &=  \int_{-\infty}^\infty \langle  W_k(\xi),N(\xi;\lambda,B) U(\xi,B) \rangle \, d\xi \\
    &= \int_{-\infty}^\infty (\vec{w}_k(\xi),(B(\xi)-(\lambda(B)-\lambda_0))\vec{u}(\xi;B))\, d\xi,
  \end{aligned}
\end{equation*}
where $W_k=(-\vec{w}_k',\vec{w}_k)^T$ and $\vec{u}(\xi,B)$ is the vector consisting of the first $n$ components of $U(\xi,B)$ and $(\cdot,\cdot)$ denotes the inner product in $\mathbb R^n$.

We claim that $F_k'(0)$, $k=1,\dots,2m$ are linearly independent. Indeed,
\begin{equation*}
  \begin{aligned}
    F_k'(0)B &= \int_{-\infty}^\infty (\vec{w}_k(\xi),(B(\xi)- \lambda'(0)B)\vec{u}_*(\xi))\, d\xi \\
    &=\int_{-\infty}^\infty (\vec{w}_k(\xi),B(\xi)\vec{u}_*(\xi))\, d\xi + \int_{-\infty}^\infty (\vec{u}_*(\xi),B(\xi)\vec{u}_*(\xi))\, d\xi \int_{-\infty}^\infty (\vec{w}_k(\xi),\vec{u}_*(\xi))\, d\xi
  \end{aligned}
\end{equation*}
by Lemma \ref{lma8}.
Let $\alpha_1,\dots,\alpha_{2m} \in \mathbb R$ be such that for every $B\in X_\beta$,
\begin{equation*}
    0 = \sum_{k=1}^{2m} \alpha_k F_k'(0) B.
\end{equation*}
After a rearrangement, we then have
\begin{equation*}
    \int_{-\infty}^\infty (\vec{w}(\xi) + \vec{\alpha}_* \vec{u}_*(\xi),B(\xi) \vec{u}_*(\xi))\, d\xi = 0,
\end{equation*}
where we have used the notation  
\begin{equation*}
\left\{
  \begin{aligned}
    \vec{w}(\xi) &= \sum_{k=1}^{2m} \alpha_k \vec{w_k}(\xi) \\
    \vec{\alpha}_* &= \int_{-\infty}^\infty (\vec{w}(\xi),\vec{u}_*(\xi))\, d\xi. 
  \end{aligned}
  \right.
\end{equation*}

Now we apply ~\cref{lma10} with $\vec{v}:= \vec{w}+\vec{\alpha}_*\vec{u}_*$ and conclude that \begin{equation*}
    \sum_{k=1}^{2m} \alpha_k \vec{w}_k(\xi)+\vec{\alpha}_*\vec{u}_*(\xi)=0
\end{equation*} 
for all $\xi\in\mathbb R$.
Then also 
\begin{equation*}
    \sum_{k=1}^{2m} \alpha_k \vec{w}_k'(\xi)+\vec{\alpha}_* \vec{u}_*'(\xi)=0
\end{equation*} 
and in particular for $\xi=0$ we obtain 
\begin{equation*}
    \sum_{k=1}^{2m} \alpha_k W_k(0) + \vec{\alpha}_* U_*^\perp(0) =0.
\end{equation*}
But $\{W_k(0); k=1,\dots, 2m\} \cup \{U^\bot_*(0)\}$ is a basis for $\ker Q^*$, therefore $\alpha_k=\vec{\alpha_*}=0$. Now it follows that $F'_k(0)$ are linearly independent. This concludes the proof of our main theorem.

\end{proof}

\section*{Acknowledgements}
The second author would like to thank Carina Geldhauser for useful comments and feedback.

%\section*{Compliance with Ethical Standards}
%Conflict of interest: The authors declare that they have no conflict of interest.

\bibliography{reference}{}
\bibliographystyle{plain}
%\bibliography{}   % name your BibTeX data base

\end{document}